\newtheorem{theorem}{Theorem}[section]
\newtheorem{lemma}[theorem]{Lemma}
\newtheorem{proposition}[theorem]{Proposition}
\newtheorem{corollary}[theorem]{Corollary} 
\newtheorem*{definition}{Definition}
\newtheorem*{notation}{Semantic conventions}
\newtheorem*{acknowledgement}{Acknowledgements}
\newtheorem*{thm}{Theorem}
\numberwithin{equation}{section}
\begin{document}
\title{Ascending HNN extensions of polycyclic groups have the same cohomology as their profinite completions}
\author{
{\sc Karl Lorensen}\\
\\
Mathematics Department\\
Pennsylvania State University, Altoona College\\
Altoona, PA 16601-3760\\
USA\\
e-mail: {\tt kql3@psu.edu}
}

\maketitle

\begin{abstract} Assume $G$ is a polycyclic group and $\phi:G\to G$ an endomorphism.
Let $G\ast_{\phi}$ be the ascending HNN extension of $G$ with respect to $\phi$; that is, $G\ast_{\phi}$ is given by the presentation
$$G\ast_{\phi}=\langle G, t \ |\  t^{-1}gt = \phi(g)\ \mbox{for all}\ g\in G\rangle.$$
Furthermore, let $\widehat{G\ast_{\phi}}$ be the profinite completion of $G\ast_{\phi}$.
We prove that, for any finite, discrete $\widehat{G\ast_{\phi}}$-module $A$, the map $H^*(\widehat{G\ast_{\phi}}, A)\to H^*(G\ast_{\phi},A)$ induced by the canonical map $G\ast_{\phi}\to \widehat{G\ast_{\phi}}$ is an isomorphism.

\vspace{12pt}

\noindent {\bf Mathematics Subject Classification (2010)}:  20JO6, 20E18, 20E06
\end{abstract}

\section{Introduction}

\indent If $\phi:G\to G$ is a group monomorphism, the {\it ascending HNN extension} of $G$ with respect to $\phi$, denoted $G\ast_{\phi}$, is defined by
$$G\ast_{\phi}=\langle G, t \ |\  t^{-1}gt = \phi(g)\ \mbox{for all}\ g\in G\rangle.$$
This paper is concerned with ascending HNN extensions of polycyclic groups. These types of 
ascending HNN extensions merit study for an important reason: they comprise precisely those finitely generated solvable groups whose finitely generated subgroups are all finitely presented. This characterization is established in \cite{groves}, where the structure of these groups is examined in detail. Another salient property manifested by every ascending HNN extension of a polycyclic group is that of residual finiteness, proved in \cite{wise}. 

Our goal is to prove the following result about the profinite completion $\widehat{G\ast_{\phi}}$ of
$G\ast_{\phi}$ if $G$ is polycyclic.

\begin{thm} Let $G$ be a polycyclic group and $\phi:G\to G$ a monomorphism. Then, for any finite, discrete $\widehat{G\ast_{\phi}}$-module $A$, the map 
\begin{equation} H^n(\widehat{G\ast_{\phi}}, A)\to H^n(G\ast_{\phi},A)\end{equation} induced by the canonical map $G\ast_{\phi}\to \widehat{G\ast_{\phi}}$ is an isomorphism for all $n\geq 0$. 
\end{thm}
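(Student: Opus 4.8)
The plan is to prove the equivalent assertion that $G\ast_\phi$ is a \emph{good} group in the sense of Serre, i.e. that the comparison maps attached to the completion $G\ast_\phi\to\widehat{G\ast_\phi}$ are isomorphisms on cohomology with every finite coefficient module. The backbone is the homomorphism $\pi\colon G\ast_\phi\to\mathbb Z$ sending $G\mapsto 0$ and $t\mapsto 1$: discretely $G\ast_\phi$ is the HNN extension with vertex and edge group $G$ and associated injection $\phi$, while on completions $\pi$ induces $\widehat{G\ast_\phi}\to\widehat{\mathbb Z}$. I shall compare the two resulting cohomology ladders. Throughout I use two standard facts about the base: a polycyclic group $G$ is good, and $H^i(G,M)$ is finite for every $i$ and every finite module $M$ (polycyclic groups are of type $FP_\infty$ and have finite cohomological dimension).

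On the discrete side, the Mayer--Vietoris sequence of the HNN extension (equivalently, the Wang sequence of $1\to B\to G\ast_\phi\to\mathbb Z\to 1$, where $B=\varinjlim(G\xrightarrow{\phi}G\xrightarrow{\phi}\cdots)$) yields, for each $n$, a short exact sequence
\[0\to\operatorname{coker}\!\big(1-\phi^\ast\mid H^{n-1}(G,A)\big)\to H^n(G\ast_\phi,A)\to\ker\!\big(1-\phi^\ast\mid H^{n}(G,A)\big)\to 0,\]
in which $\phi^\ast$ is the endomorphism of $H^\ast(G,A)$ induced by $\phi$ together with the action of $t$ on $A$. Because $A$ is finite, each $V_n:=H^n(G,A)$ is a finite abelian group, so Fitting's lemma splits it as $V_n=W_n\oplus V_n'$ with $\phi^\ast$ an automorphism of $W_n:=\bigcap_m(\phi^\ast)^m V_n$ and nilpotent on $V_n'$. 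Since $1-\phi^\ast$ is then invertible on $V_n'$, the kernel and cokernel of $1-\phi^\ast$ depend only on the \emph{$\phi$-stable part} $W_n$, which is exactly $H^n(B,A)$ (the system $\{H^n(G_i,A)\}$ is Mittag--Leffler, so $\varprojlim{}^1$ vanishes and $H^n(B,A)=\varprojlim(V_n\xleftarrow{\phi^\ast}V_n\xleftarrow{}\cdots)=W_n$).

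The decisive structural point on the profinite side is that \emph{in every finite quotient of $G\ast_\phi$ the monomorphism $\phi$ is forced to be an automorphism}: conjugation by the image of $t$ carries the image of $G$ isomorphically onto the image of $\phi(G)$, and a self-embedding of a finite group is onto. Passing to the limit, the closure $\overline G$ of $G$ in $\widehat{G\ast_\phi}$ is a profinite group on which $\phi$ induces a topological \emph{automorphism} $\bar\phi$; consequently $\overline G$ is normal, $\bar t^{-1}\overline G\,\bar t=\bar\phi(\overline G)=\overline G$, and $\widehat{G\ast_\phi}\cong\overline G\rtimes\widehat{\mathbb Z}$ with $\widehat{\mathbb Z}$ acting through $\bar\phi$. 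As $\operatorname{cd}_p\widehat{\mathbb Z}=1$, the Lyndon--Hochschild--Serre sequence of this extension collapses to the profinite Wang sequence
\[0\to\operatorname{coker}\!\big(1-\bar\phi^\ast\mid H^{n-1}(\overline G,A)\big)\to H^n(\widehat{G\ast_\phi},A)\to\ker\!\big(1-\bar\phi^\ast\mid H^{n}(\overline G,A)\big)\to 0.\]
Naturality of these constructions under $G\ast_\phi\to\widehat{G\ast_\phi}$ produces a morphism from the profinite short exact sequence to the discrete one, compatible with $\bar\phi^\ast$ and $\phi^\ast$; by the five lemma it suffices to prove that for every $n$ the induced fibre map $H^n(\overline G,A)\to H^n(G,A)$ carries $H^n(\overline G,A)$ isomorphically onto $W_n$, since the discrete kernels and cokernels see only $W_\bullet$.

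This fibre comparison is the heart of the matter and the step I expect to be hardest. One inclusion is formal: since $\bar\phi^\ast$ is an automorphism and the comparison map intertwines $\bar\phi^\ast$ with $\phi^\ast$, its image is $\phi^\ast$-stable with $\phi^\ast$ acting surjectively, hence lies in $\bigcap_m(\phi^\ast)^mV_n=W_n$. The reverse inclusion and injectivity are where goodness of the base must be fed in; the clean way to organize this is to show that the induced profinite topology on $B$ is its full profinite topology, so that $\overline G\cong\widehat B$, and then that the locally polycyclic group $B=\varinjlim_\phi G$ is itself good, whence $H^n(\overline G,A)=H^n(\widehat B,A)\xrightarrow{\ \cong\ }H^n(B,A)=W_n$. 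The genuine difficulty is the tension between the inverse-limit nature of the continuous cohomology of $\overline G$ (a quotient of $\widehat G$ on which $\hat\phi$ has been made invertible) and the direct-limit description of $B$: one must leverage the finiteness of $H^\ast(G,A)$ and goodness of each copy of $G$ to show that $\phi$-stabilization commutes with completion. Establishing this, together with the verification that $\overline G$ really is the kernel of $\widehat{G\ast_\phi}\to\widehat{\mathbb Z}$ (so that the profinite Wang sequence is available), constitutes the technical core of the argument; the remaining assembly via the five lemma is routine.
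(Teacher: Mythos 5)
Your outline correctly identifies the right reduction, and in fact it is essentially the same reduction the paper makes: writing $G\ast_\phi$ as an extension of $\mathbb Z$ by $B=G_\phi=\varinjlim(G\xrightarrow{\phi}G\to\cdots)$, observing that in finite quotients $\phi$ becomes an automorphism so that $\widehat{G\ast_\phi}$ is (the closure of $B$)$\rtimes\hat{\mathbb Z}$, and concluding that it suffices to prove two things: (a) the closure of $B$ in $\widehat{G\ast_\phi}$ is all of $\widehat{B}$, i.e.\ the profinite topology of $G\ast_\phi$ induces the full profinite topology on $B$; and (b) $B$ is good with finite cohomology in each degree. (The paper packages this reduction slightly differently, via Serre's extension criterion for $1\to N\to\Gamma\to Q\to 1$ with $N$ good and having finitely many subgroups of each finite index, rather than via an explicit five-lemma comparison of Wang sequences; the Fitting/Mittag--Leffler bookkeeping you do with $W_n$ is then absorbed into the spectral-sequence comparison and is not needed separately.)

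The genuine gap is that you stop exactly where the theorem begins: you explicitly defer both (a) and (b), and these constitute essentially the entire content of the paper. For (a), the paper proves that $G_\phi$ is finitely generated \emph{as a topological group in its profinite topology} whenever $G$ is finitely generated (Lemma 3.2, a nontrivial argument producing a finite-index $\phi$-invariant subgroup $M=\bigcap_j\phi^{-j}(N)$ on which $\phi$ induces an automorphism of $G/M$); this is what makes Serre's extension criterion applicable and settles the topology question. For (b), goodness of $B=G_\phi$ is \emph{not} a formal consequence of goodness of $G$: cohomology of the direct limit is controlled by an inverse limit over restriction maps, while cohomology of $\widehat{G_\phi}$ is a direct limit over the finite quotients, and showing these agree is precisely the hard comparison you flag but do not carry out. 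The paper does it by induction on the derived length of $G$, and the abelian base case requires a change of gears to homology: a universal coefficient argument reduces everything to proving $\widehat{H_n(G_\phi)}\cong H_n(\widehat{G_\phi})$ for $G$ free abelian of finite rank, which in turn rests on Cartan's identification of $H_\ast$ of a torsion-free abelian group with its exterior algebra, a profinite analogue of that theorem, and the compatibility of exterior powers with profinite completion for groups finitely generated in their profinite topology (all of Section 2). None of this machinery, nor any substitute for it, appears in your proposal, so as it stands the argument establishes only that the theorem follows from an unproven statement that is at least as deep as the theorem itself.
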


\noindent Such groups whose cohomology coincides with that of their profinite completions for all finite coefficient modules were described as ``good groups" by J-P. Serre in [{\bf 14}, Exercise 2, Chapter 2], an appellation that has persisted to this day. These groups have sparked a great deal of interest  recently, partly due to their applications to geometry; see, for instance, \cite{bianchi}, \cite{lorensen}, and \cite{schroeer}. As reflected by the examples in these three references, the property of ``goodness"
 often accompanies strong forms of residual finiteness, like subgroup separability or cyclic subgroup separability.  

Among the most elementary examples of ``good groups" are free groups and polycyclic groups.  
On the surface, ascending HNN extensions of these types of groups would appear to be unlikely candidates for Serre's cohomological property, since, although residually finite, they are not, in general,  cyclic subgroup separable. However, in \cite{def} it was established that, if $G$ is a finitely generated free group and $\phi: G\to G$ a monomorphism, then $G\ast_{\phi}$ is ``good." Moreover, strong evidence that this might also be true if $G$ is polycyclic was adduced in \cite{lorensen}, where the author showed that, in this case, the map (1.1) is an isomorphism for $n=2$. 
 
Our proof of the above theorem, presented in Section 3, begins with the observation that, for an arbitrary group $G$, $G\ast_{\phi}=G_{\phi}\rtimes \mathbb Z$, where $G_{\phi}$ is the direct limit of the sequence
$$G\stackrel{\phi}{\to} G\stackrel{\phi}{\to} G\stackrel{\phi}{\to} \cdots.$$ 
Furthermore, if $G$ is finitely generated, then $\widehat{G\ast_{\phi}}=\widehat{G_{\phi}}\rtimes \hat{\mathbb Z}$, where $\widehat{G_{\phi}}$ is the profinite completion of $G_{\phi}$.  From the Mayer-Vietoris sequences arising from these two semidirect product decompositions, we can see that, in order to prove that $G\ast_{\phi}$ is ``good," it suffices to show that $G_{\phi}$ is ``good." We prove the latter assertion for $G$ polycyclic by induction on the solvability length of $G$. Of pivotal importance for both the base case and the inductive step is that $G$'s being finitely generated ensures that $G_{\phi}$ is finitely generated as a topological group with respect to its profinite topology.  In fact, with the aid of this  observation, the inductive step can be accomplished by a very routine argument. The base case, where $G$ is abelian, on the other hand, presents considerably more difficulty, requiring a shift of focus from cohomology to homology via the universal coefficient theorem. The key ingredient in the proof of this case turns out to be the fact that the profinite completion of $H_n(G_{\phi},\mathbb Z)$ is isomorphic to $H_n(\widehat{G_{\phi}},\hat{\mathbb Z})$, provided $G$ is free abelian of finite rank. In order to establish this property, we employ the relation between homology and exterior powers, which forms the subject of Section 2. 

The identification of this new class of residually finite, solvable groups that enjoy Serre's cohomological property invites the question whether there are perhaps other varieties of ``good" solvable groups, still awaiting discovery. One natural class to consider next is that of residually finite, solvable, minimax groups, which includes all the ascending HNN extensions of polycyclic groups. As our final result, we prove that every group in this larger class is indeed ``good" as well.

\begin{thm} Let $G$ be a residually finite, solvable, minimax group. Then, for any finite, discrete $\hat{G}$-module $A$, the map 
\begin{equation} H^n(\hat{G}, A)\to H^n(G,A)\end{equation} induced by the canonical map $G\to \hat{G}$ is an isomorphism for all $n\geq 0$. 
\end{thm}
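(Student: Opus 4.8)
The plan is to deduce this statement from the preceding theorem by combining the structure theory of solvable minimax groups with the standard behaviour of ``goodness'' under extensions and under passage to subgroups of finite index. I will rely repeatedly on two facts. First, if $1\to N\to \Gamma\to Q\to 1$ is exact with $N$ and $Q$ good, with $H^n(N,A)$ finite for every finite module $A$, and with the induced sequence $1\to\widehat N\to\widehat\Gamma\to\widehat Q\to 1$ of profinite completions again exact, then $\Gamma$ is good; this is the usual comparison of the Lyndon--Hochschild--Serre spectral sequences on the discrete and profinite sides. Second, goodness together with the finiteness of $H^n(-,A)$ for finite $A$ is inherited by subgroups of finite index and, for a \emph{normal} finite-index subgroup $\Gamma_0$, lifts back to $\Gamma$: downward one uses Shapiro's lemma applied to the (finite) coinduced modules on both sides, and upward one applies the extension criterion to $1\to\Gamma_0\to\Gamma\to\Gamma/\Gamma_0\to 1$, using that $\Gamma_0$ inherits its profinite topology from $\Gamma$ so that $1\to\widehat{\Gamma_0}\to\widehat\Gamma\to\Gamma/\Gamma_0\to 1$ is exact.

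The heart of the argument is a structural reduction, and it is forced by the fact that the class of residually finite solvable minimax groups is \emph{not} closed under quotients: the quotient $G/G^{(d-1)}$ by the last nontrivial term of the derived series need not be residually finite, and non-residually-finite solvable minimax groups need not be good (a quasicyclic quotient $\mathbb Z(p^\infty)$ already fails, its profinite completion being trivial). One therefore cannot induct naively on the derived length. Instead I will use the structure theory to isolate all of the ``bad'' behaviour inside a normal subgroup that is itself good. Since the divisible Chernikov sections of $G$ lie in its finite residual, the residual finiteness hypothesis forces $G$ to be virtually torsion-free, so after passing to a subgroup of finite index I may assume $G$ is torsion-free. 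Its Fitting subgroup $K$ is then a torsion-free nilpotent minimax group, normal in $G$, and (in the finitely generated case, with a further finite-index passage) the quotient $G/K$ is finitely generated abelian-by-finite, which I may arrange to be $\mathbb Z^{r}$.

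The purpose of this decomposition is that $K$ is exactly the type of group governed by the preceding theorem. A torsion-free nilpotent minimax group is isomorphic to a direct limit $N_\phi=\varinjlim(N\xrightarrow{\phi}N\xrightarrow{\phi}\cdots)$ for a suitable finitely generated, hence polycyclic (indeed nilpotent), group $N$ and an injective endomorphism $\phi$; for instance $\mathbb Z[1/m]\cong\mathbb Z_\phi$ with $\phi$ multiplication by $m$. Thus $K\cong N_\phi$ is good by the proof of the preceding theorem, which establishes goodness of $G_\phi$ for every polycyclic $G$. Moreover $K$ is finitely generated as a topological group in its profinite topology, the ``pivotal'' property already exploited there; this is what ensures that the completion sequence $1\to\widehat K\to\widehat G\to\widehat{\mathbb Z^{r}}\to 1$ stays exact, while the finiteness of $H^n(K,A)$ for finite $A$ follows, just as in the abelian base case, by passing to homology through the universal coefficient theorem and using that $H_*(N_\phi,\mathbb Z)=\varinjlim H_*(N,\mathbb Z)$ is abelian minimax. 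With $K$ and $\mathbb Z^{r}$ good and these finiteness and exactness conditions verified, the extension criterion makes the finite-index subgroup good, and the finite-index lemma then propagates goodness back up to $G$. (The abelian case of the theorem is itself this mechanism with $r=0$, reducing to the goodness of $A_\phi$ for free abelian $A$.)

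The main obstacle I anticipate is the structural reduction itself, and in particular making it uniform over the \emph{non}-finitely-generated groups (such as $\mathbb Z[1/m]$) that the hypothesis admits: one must show that \emph{every} residually finite solvable minimax group, not merely the finitely generated ones, possesses a normal series whose infinite factors are of the form $G_\phi$ for polycyclic $G$ and whose top quotient is finitely generated abelian, and that at each stage the relevant subgroup is closed in, and inherits its profinite topology from, the ambient group, so that the extension criterion genuinely applies. Verifying this topological compatibility---equivalently, that these subgroups are efficiently separated in the profinite topology of $G$---is precisely where the finite total rank of minimax groups and the residual finiteness hypothesis must both be brought to bear, and I expect it to be the most delicate point of the proof.
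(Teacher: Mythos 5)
Your proposal has a genuine gap at its structural core. The claim that every torsion-free abelian (let alone nilpotent) minimax group is isomorphic to $N_\phi$ for some finitely generated $N$ and injective endomorphism $\phi$ is false. The groups $N_\phi$ with $N$ finitely generated form a countable family of isomorphism types, whereas already in rank $2$ there are uncountably many pairwise non-isomorphic torsion-free abelian minimax groups: for $\alpha\in\mathbb Z_p$ the subgroups $H_\alpha=\{(a,b)\in\mathbb Z[1/p]^2 : \alpha a-b\in\mathbb Z_p\}$ satisfy $H_\alpha/\mathbb Z^2\cong\mathbb Z(p^\infty)$, are residually finite and minimax, and fall into uncountably many isomorphism classes because the countable group $GL_2(\mathbb Z)$ cannot identify all the lines in $\mathbb Z_p^2$. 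So the Fitting subgroup $K$ you isolate need not be covered by the preceding theorem, and the reduction collapses. You also flag, correctly, that your top quotient $G/K$ need not be finitely generated when $G$ is not, and you leave that case open; since the theorem admits non-finitely-generated groups such as $\mathbb Z[1/m]\rtimes\mathbb Z[1/m]$-type examples, this is a second unresolved point rather than a mere technicality.

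The paper's proof sidesteps both difficulties. The input it actually needs from Section 2 is not that the abelian kernel has the form $A_\phi$, but only that it is torsion-free and \emph{finitely generated relative to its profinite topology}; Theorem 2.9 and the universal coefficient argument apply verbatim under that weaker hypothesis. A residually finite abelian minimax group is (finite torsion) $\oplus$ (torsion-free), and the torsion-free part is an extension of a free abelian group of finite rank by finitely many quasicyclic groups; quasicyclic groups have no proper subgroups of finite index, so the whole group is finitely generated in its profinite topology, and the argument of the abelian case of Theorem 3.6 goes through. For the inductive step the paper does not use the Fitting subgroup or pass to finite index at all: residual finiteness yields an abelian normal series whose terms are \emph{closed} in the profinite topology of $G$, so each quotient is again residually finite, and Proposition 3.1 (whose hypothesis on the kernel is exactly ``finitely generated in its profinite topology'') finishes the induction. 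If you replace your claim ``$K\cong N_\phi$'' by ``$K$ is torsion-free and topologically finitely generated in its profinite topology'' and induct along a closed abelian normal series rather than via the Fitting subgroup, your argument becomes essentially the paper's.
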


\vspace{20pt}

\begin{notation}{\rm When using the term ``group,"  we will mean an abstract group; profinite groups will always be identified with the adjective ``profinite." 

When employing the cohomology and homology of a profinite group, we will always mean continuous cohomology and profinite homology, respectively. Moreover, the same conventions apply to the $\mbox{Ext}$ and $\mbox{Tor}$ functors.

The {\it profinite topology} on a group is the topology whose basis at the identity consists of all the normal subgroups of finite index. A group $G$ is {\it finitely generated relative to its profinite topology} if it is finitely generated as a topological group, where the topology employed is the profinite topology. This is equivalent to the assertion that there exist $g_1,\cdots, g_n\in G$ such that, for every epimorphism $\epsilon$ from $G$ onto a finite group $F$, $\epsilon(g_1),\cdots, \epsilon(g_n)$ generate $F$.

When we refer to a ``finitely generated profinite group," we will always mean finitely generated in the topological sense.

If $G$ is a group, then $\hat{G}$ denotes its profinite completion and $c_G:G\to \hat{G}$ the completion map.

A {\it graded ring} $R^\ast$ is a sequence $(R^n)_{n=0}^{\infty}$ of additive abelian groups such that        $R^0=\mathbb Z$, together with bilinear product maps $R^i\times R^j\to R^{i+j}$ for all $i, j\in \mathbb N$ that obey the associative property. If $R^\ast$ is a graded ring such that, for each odd natural number $n$, $x^2=0$ for all $x\in R^n$, then $R^\ast$ is {\it strictly anticommutative}.  

A {\it profinite graded ring} $\Omega^\ast$ is a sequence $(\Omega^n)_{n=0}^{\infty}$ of additive profinite abelian groups such that $\Omega^0=\hat{\mathbb Z}$, together with continuous,  bilinear product maps $\Omega^i\times \Omega^j\to \Omega^{i+j}$ for all $i, j\in \mathbb N$, obeying the associative property.   

For a group $G$ we denote the homology group $H_n(G,\mathbb Z)$, where the action of $G$ on $\mathbb Z$ is trivial, by $H_n(G)$. Similarly, if $\Gamma$ is a profinite group, then $H_n(\Gamma)$ represents $H_n(\Gamma, \hat{\mathbb Z})$, where the action of $\Gamma$ on $\hat{\mathbb Z}$ is trivial.

Although in the introduction we only referred to ascending HNN extensions with respect to monomorphisms, in the body of the paper we will form these constructions with respect to arbitrary endomorphisms. Hence, if $\phi:G\to G$ is a group endomorphism, 
$$G\ast_{\phi}=\langle G, t \ |\  t^{-1}gt = \phi(g)\ \mbox{for all}\ g\in G\rangle.$$ 
The notation $G\ast_{\phi}$ that we employ is borrowed from \cite{geoghegan}. }

\end{notation}

\section{Homology and exterior powers}

This section is devoted to proving the formula
 \begin{equation} \widehat{H_n(G)}\cong H_n(\hat{G})\end{equation}
 if $G$ is a torsion-free, abelian group that is finitely generated with respect to its profinite topology. This formula will play an important role in the proof of the main theorem in Section 3. 
The proof of (2.1) is based on the connection between homology and exterior powers, for both abstract and profinite abelian groups. 

If $G$ is an  abelian group, then we denote the exterior power ring of $G$ by $\bigwedge^{\ast} G$. This is a graded ring that can be represented in each positive dimension $n$ as the quotient of $\bigotimes_{i=1}^n G$ by the subgroup generated by all elements of the form $g_1\otimes \cdots \otimes g_n$ such that $g_i=g_{i+1}$ for some $i$, with the multiplication defined by extending the tensoring operation linearly. The exterior power ring, then, is strictly anticommutative and enjoys the following universal property: for any strictly anticommutative graded ring $R^{\ast}$ and group homomorphism $\theta:G\to R^1$, there exists a unique graded ring homomorphism $\phi^{\ast}:\bigwedge^{\ast} G\to R^{\ast}$ such that $\phi^1=\theta$. H. Cartan \cite{cartan} established the following connection between the integral homology of a finitely generated, torsion-free abelian group and its exterior power; for a more contemporary proof, in English, see [{\bf 1}, p. 123]. 

\begin{theorem} {\rm (Cartan)} If $G$ is a finitely generated, torsion-free abelian group and $H_{\ast}(G)$ is regarded as a graded ring using the Pontryagin product, then

$$H_{\ast}(G)\cong \mbox{$\bigwedge^{\ast}$} G.$$
\end{theorem}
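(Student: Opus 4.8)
The plan is to reduce the statement to the case $G=\mathbb Z$ and then assemble the general case via the K\"unneth theorem, exploiting the multiplicativity of the Pontryagin product. For an abelian group $G$ the canonical isomorphism $G\xrightarrow{\sim} H_1(G)$ (identifying $G$ with its own abelianization) furnishes a homomorphism $\theta:G\to H_1(G)$. Provided the Pontryagin ring $H_{\ast}(G)$ is a strictly anticommutative graded ring in the sense defined above, the universal property of $\bigwedge^{\ast} G$ produces a unique graded ring homomorphism $\Phi:\bigwedge^{\ast} G\to H_{\ast}(G)$ with $\Phi^1=\theta$; the content of the theorem is that $\Phi$ is an isomorphism.

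Establishing strict anticommutativity is the first genuine point. Graded commutativity of the Pontryagin product yields only $2x^2=0$ for $x$ of odd degree, so I would first show that $H_{\ast}(\mathbb Z^r)$ is free abelian, whence $x^2=0$ follows. This torsion-freeness I would obtain from the K\"unneth theorem by induction on $r$: at each stage both tensor factors are free abelian and all $\mathrm{Tor}$ terms vanish. For the base case $G=\mathbb Z$ one computes directly that $H_0(\mathbb Z)\cong\mathbb Z$, $H_1(\mathbb Z)\cong\mathbb Z$, and $H_n(\mathbb Z)=0$ for $n\geq 2$, which is exactly $\bigwedge^{\ast}\mathbb Z$, and here $\Phi$ is visibly an isomorphism.

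For the inductive step I would write $G\cong G_1\oplus G_2$ with $G_1\cong\mathbb Z^{r-1}$ and $G_2\cong\mathbb Z$. On the exterior-algebra side I use the standard graded-ring decomposition $\bigwedge^{\ast}(G_1\oplus G_2)\cong\bigwedge^{\ast}G_1\otimes\bigwedge^{\ast}G_2$ as a graded tensor product. On the homology side I use the K\"unneth isomorphism $H_{\ast}(G_1)\otimes H_{\ast}(G_2)\xrightarrow{\sim}H_{\ast}(G_1\times G_2)$, together with the compatibility that, under the cross product, the Pontryagin ring of a direct product is the graded tensor product of the Pontryagin rings of its factors. Naturality of $\theta$ in $G$ then forces $\Phi_G$ to coincide, under these two decompositions, with $\Phi_{G_1}\otimes\Phi_{G_2}$. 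By the inductive hypothesis and the base case both factors are isomorphisms, so $\Phi_G$ is an isomorphism, completing the induction.

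I expect the main obstacle to be precisely this compatibility underlying the inductive step: that the K\"unneth cross product carries the Pontryagin product on $H_{\ast}(G_1\times G_2)$ to the graded tensor product multiplication, with the correct Koszul signs. This is what links the ring structures, rather than merely the underlying graded groups, on the two sides. Verifying it requires unwinding the Pontryagin product as the composite of the cross product with the map induced by the group multiplication and matching it against the analogous factorization of the exterior multiplication; the sign bookkeeping here is exactly what makes the strictly anticommutative, rather than merely commutative, structure emerge.
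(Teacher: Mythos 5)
The paper does not prove this theorem itself --- it is quoted from Cartan, with Brown's book cited for a modern proof --- and your argument is essentially that standard cited proof: establish strict anticommutativity of the Pontryagin ring from the torsion-freeness of $H_{\ast}(\mathbb Z^r)$, invoke the universal property of $\bigwedge^{\ast}G$ to get the comparison map, verify the base case $G=\mathbb Z$ directly, and run the K\"unneth induction using the multiplicativity of the cross product. Your outline is correct, and you rightly isolate the compatibility of the cross product with the Pontryagin and graded-tensor ring structures (with Koszul signs) as the one point requiring genuine verification.
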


For a profinite abelian group $\Gamma$ we represent the profinite exterior power ring of $\Gamma$ by $\hat{\bigwedge}^{\ast} \Gamma$; it is a strictly anticommutative graded profinite ring with the following universal property: for any strictly anticommutative graded profinite ring $\Omega^\ast$ and any continuous group homomorphism $\theta:G\to \Omega^1$, there exists a unique continuous homomorphism of graded rings $\phi^{\ast}:\bigwedge^{\ast} G\to \Omega^{\ast}$ such that $\phi^1=\theta$. As described in [{\bf 16}, p. 131],  $\hat{\bigwedge}^n \Gamma$ can be constructed as the completion of $\bigwedge^n \Gamma$ with respect to the kernels of all the maps $\bigwedge^n \Gamma\to \bigwedge^n \Gamma/N$ for $N\unlhd_o \Gamma$. Alternatively, it may viewed as the quotient of the completed tensor product $\hat{\bigotimes}^n \Gamma$ by the closed subgroup generated by all elements of the form $g_1\hat{\otimes} \cdots \hat{\otimes} g_n$ such that $g_i=g_{i+1}$ for some $i$.  
As established below, the profinite exterior power coincides with the abstract exterior power for a finitely generated profinite abelian group.

\begin{proposition} If $\Gamma$ is a finitely generated profinite abelian group, then the canonical map
${\bigwedge}^{\ast} \Gamma\to \hat{\bigwedge}^{\ast} \Gamma$
is an isomorphism of graded rings.
\end{proposition}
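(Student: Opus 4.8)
The plan is to establish the isomorphism one graded degree at a time, a homomorphism of graded rings being an isomorphism exactly when it is bijective in every dimension, and to exploit the two universal properties recorded above: the canonical map is the unique graded-ring homomorphism extending $\mathrm{id}_\Gamma\colon\Gamma\to\hat{\bigwedge}^1\Gamma$, so the entire comparison is governed by the degree-one part of $\Gamma$ together with multiplicativity. In dimension $1$ the map is the identity on $\Gamma$, so the substance of the proposition lies in the higher dimensions $n\geq 2$, where one must show that the canonical map $\bigwedge^n\Gamma\to\hat{\bigwedge}^n\Gamma$ realizes the expected identification. I would work throughout with the description $\hat{\bigwedge}^n\Gamma=\varprojlim_{N\unlhd_o\Gamma}\bigwedge^n(\Gamma/N)$, which makes the profinite side directly computable from the finite quotients of $\Gamma$.

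The decisive structural input is the classification of finitely generated profinite abelian groups: any such $\Gamma$ is a direct product $\Gamma\cong C_1\oplus\cdots\oplus C_r$ of \emph{finitely many} procyclic groups. Both functors convert a finite direct sum into a graded tensor product — $\bigwedge^{\ast}(A\oplus B)\cong\bigwedge^{\ast}A\otimes\bigwedge^{\ast}B$ on the abstract side and $\hat{\bigwedge}^{\ast}(A\oplus B)\cong\hat{\bigwedge}^{\ast}A\,\hat{\otimes}\,\hat{\bigwedge}^{\ast}B$ on the profinite side — and the canonical map intertwines these two decompositions. This reduces the whole problem to the procyclic factors. For a procyclic group $C$ the profinite exterior power is immediate from the inverse-limit description: every open quotient $C/N$ is finite cyclic, so $\bigwedge^n(C/N)=0$ for $n\geq 2$, whence $\hat{\bigwedge}^n C=0$ for $n\geq 2$, while $\hat{\bigwedge}^1 C=C$. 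Verifying that the canonical map induces these identifications on each factor then disposes of the procyclic case, and reassembling through the compatible tensor decompositions, using the uniqueness clauses of the universal properties to recognize the reassembled map as the canonical one, yields the result.

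The main obstacle is the passage through the tensor decomposition: one must check that the canonical map is compatible with the displayed isomorphisms and, more delicately, that the ordinary tensor products on the abstract side are carried correctly onto the completed tensor products on the profinite side, so that the collapse of the higher exterior powers of the procyclic factors is faithfully reflected in every dimension. This is exactly where finite generation does its work, for it guarantees that there are only finitely many procyclic factors and hence that each $\hat{\bigwedge}^n\Gamma$ is itself a finitely generated profinite abelian group; the completed tensor product of finitely many such factors agrees with the completion of the ordinary tensor product, and the degree-one generators are not disturbed by the completion. Controlling this interaction between $\otimes$ and $\hat{\otimes}$, rather than the per-factor computation, is the technical heart of the argument.
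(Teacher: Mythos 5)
Your reduction to procyclic factors conceals a genuine gap precisely where the proposition has content. You compute $\hat{\bigwedge}^n C=0$ for $n\geq 2$ from the inverse-limit description, which is fine, but the canonical map $\bigwedge^n C\to\hat{\bigwedge}^n C=0$ is an isomorphism only if $\bigwedge^n C=0$, and you never establish this; it is not a formal consequence of $C$ being procyclic, because the underlying abstract abelian group of a procyclic group is not cyclic. With the paper's definition of $\bigwedge^n$ as a quotient of the $n$-fold tensor power of the underlying abelian group (over $\mathbb Z$), one has for instance $\bigwedge^2\mathbb Z_p\otimes\mathbb Q\cong\bigwedge^2_{\mathbb Q}\mathbb Q_p\neq 0$, since $\mathbb Q_p$ is an infinite-dimensional $\mathbb Q$-vector space. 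So the vanishing you need requires being able to move $\hat{\mathbb Z}$-scalars across the wedge, i.e.\ it requires identifying the abstract tensor and exterior constructions on a finitely generated profinite module with their completed counterparts. That identification is the entire difficulty of the proposition, and your ``verifying that the canonical map induces these identifications on each factor then disposes of the procyclic case'' simply asserts it.

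The same issue resurfaces in your reassembly step, where $\bigwedge^i C_k\otimes\bigwedge^j C_l$ must be compared with $\hat{\bigwedge}^i C_k\,\hat{\otimes}\,\hat{\bigwedge}^j C_l$. You flag this as the technical heart but only claim that the completed tensor product agrees with the \emph{completion} of the ordinary tensor product; what you actually need is that the ordinary tensor product of finitely generated profinite abelian groups is \emph{already} profinite (complete), which is stronger and is exactly the cited fact [{\bf 10}, Proposition 5.5.3(d)] on which the paper's proof runs. The paper avoids your decomposition entirely: it inducts on the degree $n$, observes that the canonical epimorphism $\bigwedge^{n-1}\Gamma\otimes\Gamma\to\bigwedge^n\Gamma$ has profinite domain by that citation and the inductive hypothesis, and concludes that $\bigwedge^n\Gamma$ is compact in its profinite topology and hence coincides with $\hat{\bigwedge}^n\Gamma$. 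Once you grant yourself the tensor-product fact needed to repair your argument, the procyclic decomposition and Künneth bookkeeping become unnecessary, so the detour buys nothing; without that fact, both the procyclic case and the reassembly fail.
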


\begin{proof} We will prove by induction that ${\bigwedge}^n \Gamma\cong \hat{\bigwedge}^n \Gamma$
for every $n\in \mathbb N$. Consider the canonical  group epimomorphism 
$$\phi: \mbox{$\bigwedge$}^{n-1} \Gamma \otimes \Gamma\to \mbox{$\bigwedge$}^n\Gamma.$$
 By the inductive hypothesis, ${\bigwedge}^{n-1} \Gamma$ is a finitely generated profinite group. Hence, by [{\bf 10}, Proposition 5.5.3(d)], the domain of $\phi$ is a profinite group. Thus $\bigwedge^n \Gamma$ is compact in the profinite topology, yielding the desired result.
\end{proof}

We wish to prove a profinite analogue of Cartan's theorem. In order to do so, we require the following
K\"unneth formula. 

\begin{theorem} Let $\Gamma_1$ and $\Gamma_2$ be profinite groups. Then there is an exact sequence

\begin{equation} \begin{CD}
0 @>>> \bigoplus_{i+j=n} H_i(\Gamma_1)\hat{\otimes} H_j(\Gamma_2) @>>> H_n(\Gamma_1\times \Gamma_2) @>>>  \bigoplus_{i+j=n-1}{\rm Tor}(H_i(\Gamma_1), H_j(\Gamma_2)) @>>>
0.\end{CD} \end{equation}
\end{theorem}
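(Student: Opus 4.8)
The plan is to derive the formula from a purely algebraic Künneth theorem over $\hat{\mathbb Z}$, following the classical route through tensor products of resolutions but carried out in the category of profinite modules. Profinite homology is a Tor functor, $H_n(\Gamma_i)\cong \mathrm{Tor}_n^{\hat{\mathbb Z}[[\Gamma_i]]}(\hat{\mathbb Z},\hat{\mathbb Z})$, where $\hat{\mathbb Z}[[\Gamma_i]]$ is the complete group algebra, so I would first choose projective (free profinite) resolutions $P_\bullet\to\hat{\mathbb Z}$ over $\hat{\mathbb Z}[[\Gamma_1]]$ and $Q_\bullet\to\hat{\mathbb Z}$ over $\hat{\mathbb Z}[[\Gamma_2]]$. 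The first key point is the canonical isomorphism $\hat{\mathbb Z}[[\Gamma_1\times\Gamma_2]]\cong\hat{\mathbb Z}[[\Gamma_1]]\hat{\otimes}\hat{\mathbb Z}[[\Gamma_2]]$, under which each $P_i\hat{\otimes}Q_j$ becomes a projective module over the complete group algebra of the product. Using that projective profinite modules are $\hat{\otimes}$-flat (so that $\hat{\otimes}$ preserves exactness against them), I would verify that $P_\bullet\hat{\otimes}Q_\bullet$ is a resolution of $\hat{\mathbb Z}\hat{\otimes}\hat{\mathbb Z}=\hat{\mathbb Z}$, and hence may be used to compute $H_\bullet(\Gamma_1\times\Gamma_2)$.

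Applying $\hat{\mathbb Z}\hat{\otimes}_{\hat{\mathbb Z}[[\Gamma_1\times\Gamma_2]]}(-)$ and commuting it past the completed tensor product gives an isomorphism of complexes
$$\hat{\mathbb Z}\hat{\otimes}_{\hat{\mathbb Z}[[\Gamma_1\times\Gamma_2]]}(P_\bullet\hat{\otimes}Q_\bullet)\cong C_\bullet\hat{\otimes}D_\bullet,$$
where $C_\bullet=\hat{\mathbb Z}\hat{\otimes}_{\hat{\mathbb Z}[[\Gamma_1]]}P_\bullet$ and $D_\bullet=\hat{\mathbb Z}\hat{\otimes}_{\hat{\mathbb Z}[[\Gamma_2]]}Q_\bullet$ are complexes of \emph{free}, hence flat, profinite $\hat{\mathbb Z}$-modules satisfying $H_\bullet(C)\cong H_\bullet(\Gamma_1)$ and $H_\bullet(D)\cong H_\bullet(\Gamma_2)$. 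The theorem then reduces to an algebraic Künneth formula over $\hat{\mathbb Z}$: for a complex $C_\bullet$ of flat profinite $\hat{\mathbb Z}$-modules and any complex $D_\bullet$ of profinite $\hat{\mathbb Z}$-modules, there is a natural short exact sequence relating $H_n(C\hat{\otimes}D)$ to the $\hat{\otimes}$ and $\mathrm{Tor}$ of the homologies. To prove this I would mimic the classical argument: since $\hat{\mathbb Z}=\prod_p\mathbb Z_p$ and every profinite $\hat{\mathbb Z}$-module splits as the product of its pro-$p$ components, the statement decomposes over the primes and reduces to the discrete valuation ring $\mathbb Z_p$. Over $\mathbb Z_p$ I would use the short exact sequence of complexes $0\to Z_\bullet\to C_\bullet\to B_\bullet[-1]\to 0$ of cycles and (shifted) boundaries, both of which are flat because they are closed, hence torsion-free, submodules of flat $\mathbb Z_p$-modules; completing-tensoring this with $D_\bullet$ and analyzing the resulting long exact sequence together with its connecting homomorphism produces the two terms $\bigoplus_{i+j=n}H_i\hat{\otimes}H_j$ and $\bigoplus_{i+j=n-1}\mathrm{Tor}(H_i,H_j)$.

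The main obstacle is controlling the completed tensor product, which, unlike the ordinary tensor product over a PID, is only right exact in general and preserves exactness only against flat modules. The argument hinges on three facts about profinite modules that I would verify or cite: that projective profinite modules are $\hat{\otimes}$-flat; that over $\mathbb Z_p$ a profinite module is flat precisely when it is torsion-free, so that the cycles and boundaries above are automatically flat; and that the profinite $\mathrm{Tor}$ functor appearing in the connecting map coincides, prime by prime, with its classical counterpart over $\mathbb Z_p$. Granting these, the bookkeeping in the long exact sequence is routine, and the naturality of the maps and the behavior of the sequence under the prime decomposition carry over verbatim from the abstract case.
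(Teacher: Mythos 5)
Your proposal is correct and takes essentially the same route the paper intends: the paper gives no proof of its own, stating only that the classical argument of [{\bf 15}, Proposition 6.1.13] ``can be carried over with ease to the profinite realm,'' and your argument is exactly that transfer, carried out with appropriate care at the points where the profinite setting differs (right-exactness of $\hat{\otimes}$, flatness of projectives, and the prime-by-prime reduction to $\mathbb Z_p$ where torsion-free closed submodules of flat modules are again flat).
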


\noindent Although well known, the above formula does not appear to be proven anywhere in the literature. Nevertheless, the proof of the abstract version, as presented, for example, in [{\bf 15}, Proposition 6.1.13], can be carried over with ease to the profinite realm. 

To prove our profinite version of Cartan's theorem, we also need the following property of profinite exterior powers, which may be proved in the same manner as the analogous result for abstract groups; see [{\bf 1}, p. 122].

\begin{lemma} If $\Gamma_1$ and $\Gamma_2$ are profinite abelian groups, then
$$\mbox{$\hat{\bigwedge}$}^n(\Gamma_1\oplus \Gamma_2) \cong \bigoplus_{i+j=n} \mbox{$\hat{\bigwedge}$}^i\Gamma_1 \hat{\otimes} \mbox{$\hat{\bigwedge}$}^j\Gamma_2$$
for every $n\geq 0$.
\end{lemma}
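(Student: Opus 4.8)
The plan is to mimic the abstract argument recorded in [{\bf 1}, p.~122], constructing the isomorphism through the universal property of the profinite exterior power. Write $R^{\ast}$ for the profinite graded object with $R^n=\bigoplus_{i+j=n}\hat{\bigwedge}^i\Gamma_1\,\hat{\otimes}\,\hat{\bigwedge}^j\Gamma_2$; this is exactly the object I wish to identify with $\hat{\bigwedge}^{\ast}(\Gamma_1\oplus\Gamma_2)$. Each $R^n$ is a finite direct sum of completed tensor products of profinite abelian groups, hence again profinite, and $R^0=\hat{\mathbb Z}\,\hat{\otimes}\,\hat{\mathbb Z}=\hat{\mathbb Z}$. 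First I would endow $R^{\ast}$ with the structure of a strictly anticommutative profinite graded ring, namely the Koszul-signed graded tensor product of $\hat{\bigwedge}^{\ast}\Gamma_1$ and $\hat{\bigwedge}^{\ast}\Gamma_2$: on homogeneous elements the product is $(\omega_1\hat{\otimes}\omega_2)(\omega_1'\hat{\otimes}\omega_2')=(-1)^{|\omega_2|\,|\omega_1'|}(\omega_1\wedge\omega_1')\hat{\otimes}(\omega_2\wedge\omega_2')$, extended continuously.

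Second, I identify $R^1=(\hat{\bigwedge}^1\Gamma_1\,\hat{\otimes}\,\hat{\bigwedge}^0\Gamma_2)\oplus(\hat{\bigwedge}^0\Gamma_1\,\hat{\otimes}\,\hat{\bigwedge}^1\Gamma_2)$ with $\Gamma_1\oplus\Gamma_2$ and apply the universal property of $\hat{\bigwedge}^{\ast}(\Gamma_1\oplus\Gamma_2)$ to the continuous homomorphism $\theta\colon\Gamma_1\oplus\Gamma_2\to R^1$, $(\gamma_1,\gamma_2)\mapsto(\gamma_1\hat{\otimes}1)+(1\hat{\otimes}\gamma_2)$. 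This yields a continuous graded ring homomorphism $\Phi\colon\hat{\bigwedge}^{\ast}(\Gamma_1\oplus\Gamma_2)\to R^{\ast}$. For the reverse map, the inclusions $\iota_k\colon\Gamma_k\hookrightarrow\Gamma_1\oplus\Gamma_2$ induce, by functoriality of the profinite exterior power, continuous graded ring homomorphisms $\iota_{k\ast}\colon\hat{\bigwedge}^{\ast}\Gamma_k\to\hat{\bigwedge}^{\ast}(\Gamma_1\oplus\Gamma_2)$, and the continuous bilinear pairing $(\omega_1,\omega_2)\mapsto\iota_{1\ast}(\omega_1)\cdot\iota_{2\ast}(\omega_2)$ factors through the completed tensor product by its universal property, assembling into a continuous homomorphism $\Psi\colon R^{\ast}\to\hat{\bigwedge}^{\ast}(\Gamma_1\oplus\Gamma_2)$.

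Third, I check that $\Phi$ and $\Psi$ are mutually inverse. Both composites are continuous graded ring endomorphisms, so it suffices to verify that they act as the identity on a topological generating set. Since $\hat{\bigwedge}^{\ast}\Gamma$ is topologically generated in degree one by $\Gamma$, and $R^{\ast}$ is likewise topologically generated by $\Gamma_1\hat{\otimes}1$ and $1\hat{\otimes}\Gamma_2$, it is enough to trace a degree-one element through each composite, which is an immediate computation.

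Finally, the main obstacle. The genuinely new content here is not the algebra of signs---establishing strict anticommutativity of $R^{\ast}$ is identical to the abstract computation---but the continuity bookkeeping: one must verify that the graded product on $R^{\ast}$ is continuous and strictly anticommutative, so that the universal property applies, and that $\Psi$ is well defined and continuous, both of which rest on the universal property of $\hat{\otimes}$ and on each $R^n$ being profinite. An alternative that sidesteps the sign computation altogether is to reduce to finite quotients: using the description $\hat{\bigwedge}^n\Gamma=\varprojlim_{N\unlhd_o\Gamma}\bigwedge^n(\Gamma/N)$ together with the cofinality of the subgroups $N_1\oplus N_2$ among the open subgroups of $\Gamma_1\oplus\Gamma_2$, one applies the abstract isomorphism to each finite quotient $(\Gamma_1/N_1)\oplus(\Gamma_2/N_2)$ and passes to the inverse limit; there the obstacle shifts to confirming naturality of the finite isomorphisms in $(N_1,N_2)$ and the commutation of $\varprojlim$ with the finite direct sum and with $\hat{\otimes}$.
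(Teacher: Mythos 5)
Your proposal is correct and follows essentially the same route the paper intends: the paper offers no proof of Lemma 2.4 beyond remarking that the abstract argument of [{\bf 1}, p.~122] carries over, and your argument is precisely that transfer --- the signed graded tensor product of the two exterior algebras, the universal property of $\hat{\bigwedge}^{\ast}$ applied in degree one, and the continuity bookkeeping supplied by the universal property of $\hat{\otimes}$. The inverse-limit alternative you sketch is a sound fallback but is not needed.
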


Armed with the above two results, we can readily prove the desired formula for the homology of a finitely generated profinite abelian group.

\begin{theorem}  If $\Gamma$ is a torsion-free, finitely generated profinite abelian group, then

$$H_n(\Gamma)\cong \mbox{$\bigwedge$}^n \Gamma$$
for all $n\geq 0$. 
\end{theorem}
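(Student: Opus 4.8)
The plan is to induct on the minimal number $d(\Gamma)$ of topological generators of $\Gamma$, peeling off a procyclic direct factor at each stage and reassembling the homology by means of the Künneth formula (Theorem 2.3) together with Lemma 2.4. First I would invoke the structure theory of finitely generated profinite abelian groups to write $\Gamma\cong\prod_p\mathbb{Z}_p^{r_p}$, where $p$ ranges over the primes and each $r_p\leq d(\Gamma)<\infty$; torsion-freeness is precisely what forces every $p$-Sylow factor to be free over $\mathbb{Z}_p$. Setting $C=\prod_{r_p\geq 1}\mathbb{Z}_p$ and $\Gamma'=\prod_{r_p\geq 2}\mathbb{Z}_p^{r_p-1}$, one obtains a splitting $\Gamma\cong C\oplus\Gamma'$ in which $C$ is torsion-free and procyclic (so $d(C)\leq 1$), while $\Gamma'$ is again torsion-free, finitely generated, and profinite abelian with $d(\Gamma')=d(\Gamma)-1$. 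This reduces the problem, when $d(\Gamma)\geq 2$, to the base case $d(\Gamma)\leq 1$ together with the inductive hypothesis applied to $\Gamma'$.

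For the inductive step I would feed the decomposition $\Gamma\cong C\times\Gamma'$ into Theorem 2.3. The key observation is that every homology group in sight is torsion-free: by the inductive hypothesis $H_j(\Gamma')\cong\mbox{$\bigwedge$}^j\Gamma'\cong\prod_p\mathbb{Z}_p^{\binom{r_p-1}{j}}$ has no torsion, while the base case gives $H_0(C)\cong\hat{\mathbb{Z}}$, $H_1(C)\cong C$, and $H_i(C)=0$ for $i\geq 2$. Since $\mbox{Tor}$ of two torsion-free profinite abelian groups vanishes, the Tor term in (2.2) drops out, and Theorem 2.3 yields $H_n(\Gamma)\cong\bigoplus_{i+j=n}H_i(C)\hat{\otimes}H_j(\Gamma')\cong\bigoplus_{i+j=n}\mbox{$\hat{\bigwedge}$}^iC\hat{\otimes}\mbox{$\hat{\bigwedge}$}^j\Gamma'$, where the last isomorphism uses the base case, the inductive hypothesis, and Proposition 2.2 to pass between the abstract and profinite exterior powers. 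Lemma 2.4 then identifies this direct sum with $\mbox{$\hat{\bigwedge}$}^n(C\oplus\Gamma')=\mbox{$\hat{\bigwedge}$}^n\Gamma$, and a final appeal to Proposition 2.2 rewrites this as $\mbox{$\bigwedge$}^n\Gamma$, closing the induction.

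The substantive work, and what I expect to be the main obstacle, is concentrated in the base case $d(\Gamma)\leq 1$, where $\Gamma\cong\prod_{p\in S}\mathbb{Z}_p$ is torsion-free procyclic; here neither Theorem 2.3 nor Lemma 2.4 applies, since the natural product decomposition is infinite. For such $\Gamma$ one has $H_0(\Gamma)\cong\hat{\mathbb{Z}}=\mbox{$\bigwedge$}^0\Gamma$ and $H_1(\Gamma)\cong\Gamma=\mbox{$\bigwedge$}^1\Gamma$, while $\mbox{$\bigwedge$}^n\Gamma=0$ for $n\geq 2$ because $\Gamma$ has rank one; the crux is therefore to prove directly that $H_n(\Gamma)=0$ for all $n\geq 2$. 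I would extract this from the cohomological dimension of $\Gamma$: since $\mbox{cd}_p(\mathbb{Z}_p)=1$, one checks that $\mbox{cd}_p(\Gamma)\leq 1$ for every prime $p$, whence $H_n(\Gamma,\hat{\mathbb{Z}})=0$ in all degrees $n\geq 2$. The only other non-formal ingredient, the vanishing of $\mbox{Tor}$ between torsion-free profinite abelian groups, reduces prime-by-prime to the flatness of the free $\mathbb{Z}_p$-modules $\mathbb{Z}_p^{r_p}$ and so should present no real difficulty once the base case is secured.
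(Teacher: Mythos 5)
Your proposal is correct and follows essentially the same route as the paper: decompose $\Gamma$ as a finite direct sum of torsion-free procyclic groups, handle the procyclic base case by a cohomological-dimension/projectivity argument giving $H_n=0$ for $n\geq 2$, and run the induction by peeling off one procyclic factor and combining Theorem 2.3 (with vanishing $\mathrm{Tor}$ term) with Lemma 2.4 and Proposition 2.2. The only differences are cosmetic: you make the structure theory $\Gamma\cong\prod_p\mathbb{Z}_p^{r_p}$ and the torsion-freeness of the homology groups explicit where the paper leaves them implicit.
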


\begin{proof} For $n=0,1$ the result is trivial; hence we assume $n\geq 2$. The profinite group $\Gamma$ can be expressed as the direct sum of finitely many infinite procyclic groups. We prove the result by induction on the number of procyclic groups in this decomposition. First assume $\Gamma$ is itself procyclic. Then $\Gamma$ is a projective profinite group, making $H_n(\Gamma)=0$ for all $n\geq 2$. In addition, $\bigwedge^n \Gamma$ is trivial if $n\geq 2$, thus confirming the result. Now assume $\Gamma=\Gamma_1 \oplus \Gamma_2$, where $\Gamma_2$ is infinite procyclic. Applying Theorem 2.3 to $\Gamma$, we have that the fourth term in sequence (2.2) is trivial. Therefore, by the inductive hypothesis together with Lemma 2.4, we have that $H_n(\Gamma)\cong \hat{\bigwedge}^n \Gamma = \bigwedge^n\Gamma$.

\end{proof} 

In order to use the above theorem to prove formula (2.1), we need to establish that 
\begin{equation} \widehat{\mbox{$\bigwedge$}^n G}\cong \mbox{$\bigwedge$}^n \hat{G}. 
\end{equation} if $G$ is an abelian group that is finitely generated relative to its profinite topology. The proof of (2.3) is based on the following lemma about graded rings. 

\begin{lemma} Let $R^{\ast}$ be a strictly anticommutative graded ring  such that $R^n$ is finitely generated in its profinite topology for each $n\in \mathbb N$. Then the following two statements hold. 
\vspace{10pt}

\noindent (i) The family $\{\widehat{R^n}\ |\ n\geq 0\}$ of profinite abelian groups  can be made into a strictly anticommutative graded profinite ring so that the completion maps $c^n:R^n\to \widehat{R^n}$ constitute a graded ring homomorphism.
\vspace{10pt}

\noindent (ii) For any strictly anticommutative graded profinite ring $\Omega^\ast$ and graded ring homomorphism $\phi^\ast: R^\ast\to \Omega^\ast$, there exists a unique continuous graded ring homomorphism $\psi^\ast:\widehat{R^\ast}\to \Omega^\ast$ such that $\psi^\ast c^\ast =\phi^\ast$.
\end{lemma}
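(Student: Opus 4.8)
The plan is to construct the product on $\{\widehat{R^n}\}$ by extending each bilinear map $R^i\times R^j\to R^{i+j}$ to the completions, and then to verify the ring axioms by a density argument. The technical heart of part (i), and the place where the finite-generation hypothesis is essential, is the following claim: \emph{every bilinear map $\beta:R^i\times R^j\to F$ into a finite abelian group $F$ factors through $(R^i/K_i)\times(R^j/K_j)$ for some finite-index subgroups $K_i\leq R^i$ and $K_j\leq R^j$}. To prove it I would fix generators $y_1,\dots,y_m$ of $R^j$ relative to its profinite topology. For each $x\in R^i$ the map $\beta(x,-):R^j\to F$ is a homomorphism into a finite group, hence is determined by its values on $y_1,\dots,y_m$; consequently there are only finitely many such homomorphisms as $x$ ranges over $R^i$, and the intersection $K_j$ of their kernels is a finite intersection of finite-index subgroups, hence itself of finite index. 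Thus $\beta$ factors through $R^i\times(R^j/K_j)$, and, since $R^j/K_j$ is finite, the symmetric argument in the first variable produces the required $K_i$.

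With the claim in hand I would apply it to the composite of the product $R^i\times R^j\to R^{i+j}$ with each finite quotient $\widehat{R^{i+j}}\to Q$: each such composite factors through finite quotients of $R^i$ and $R^j$, and therefore extends to a continuous bilinear map $\widehat{R^i}\times\widehat{R^j}\to Q$. These extensions are compatible as $Q$ varies, so passing to the inverse limit yields a continuous bilinear product $\widehat{R^i}\times\widehat{R^j}\to\widehat{R^{i+j}}$ agreeing with the original product after composition with the completion maps; in particular the $c^n$ form a graded ring homomorphism. Associativity and strict anticommutativity then follow by continuity from the density of $c^n(R^n)$ in $\widehat{R^n}$: both the associator and the squaring map $x\mapsto x^2$ on odd-degree elements are continuous and vanish on the dense subgroup $c^n(R^n)$, hence vanish identically.

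For part (ii), each component $\phi^n:R^n\to\Omega^n$ is a homomorphism into a profinite group and is therefore continuous for the profinite topology on $R^n$, since preimages of open subgroups have finite index; the universal property of the completion map $c^n$ then supplies a unique continuous homomorphism $\psi^n:\widehat{R^n}\to\Omega^n$ with $\psi^n c^n=\phi^n$. That $\psi^{\ast}$ respects products is once more a density argument: the identity $\psi^{i+j}(ab)=\psi^i(a)\psi^j(b)$ holds for $a\in c^i(R^i)$ and $b\in c^j(R^j)$ because $\phi^{\ast}$ is a ring homomorphism, both sides are continuous in $(a,b)$, and $c^i(R^i)\times c^j(R^j)$ is dense in $\widehat{R^i}\times\widehat{R^j}$; uniqueness follows because any continuous lift of $\phi^{\ast}$ is pinned down on these dense subgroups. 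The main obstacle throughout is the factoring claim of the first paragraph: separate continuity of a bilinear map in each variable is automatic, but extending it jointly and continuously to the completions requires the finitely many homomorphisms into a fixed finite group to share a common finite-index kernel, which is precisely what the hypothesis that each $R^n$ be finitely generated relative to its profinite topology guarantees.
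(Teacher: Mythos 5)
Your proof is correct, and parts of it (the treatment of part (ii), the density arguments for associativity, anticommutativity, and multiplicativity of $\psi^\ast$) coincide with the paper's. Where you genuinely diverge is in how the product is extended to the completions in part (i). The paper first extends $f_{ij}:R^i\times R^j\to R^{i+j}$ in the first variable only, obtaining $g_{ij}:\widehat{R^i}\times R^j\to\widehat{R^{i+j}}$, and then proves \emph{joint} continuity of $g_{ij}$ by a direct coset computation: for an open $\bar N\unlhd_o\widehat{R^{i+j}}$ the subgroups $\bar M_y=\{x:g_{ij}(x,y)\in\bar N\}$ all have index dividing $[\widehat{R^{i+j}}:\bar N]$, and since a finitely generated profinite group has only finitely many open subgroups of a given index, their intersection is open; it then repeats the whole argument to extend in the second variable. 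You instead isolate a factoring lemma --- every bilinear map $R^i\times R^j\to F$ with $F$ finite factors through finite quotients of both arguments --- proved by counting homomorphisms from a topologically finitely generated group into a fixed finite group, and you assemble the extended product as an inverse limit over the finite quotients of $\widehat{R^{i+j}}$. Both arguments rest on the same finiteness phenomenon (finitely many open subgroups of bounded index is equivalent to finitely many homomorphisms into a fixed finite group), but your route packages it more cleanly: the factoring lemma gives joint continuity for free and avoids the paper's two-pass extension and explicit coset manipulations, at the mild cost of having to check compatibility of the maps into the various finite quotients $Q$ before passing to the limit (which, as you note, follows by density).
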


\begin{proof}  (i). Let $f_{ij}: R^i\times R^j\to R^{i+j}$ be the function arising from the multiplication in $R^\ast$. We may obtain a function $g_{ij}:\widehat{R^i}\times R^j\to \widehat{R^{i+j}}$ extending $f_{ij}$ such that, for each $y\in R^j$, $g_{ij}(\_ , y)$ is a continuous homomorphism $\widehat{R^i}\to \widehat{R^{i+j}}$. We claim that $g_{ij}$ is continuous, where $R^j$ is given the profinite topology. 
This will follow if we can show that  $g_{ij}^{-1}(\bar{N}+a)$ is open in $\widehat{R^i}\times R^j$ for any $\bar{N}\unlhd_o \widehat{R^{i+j}}$ and $a\in \widehat{R^{i+j}}$.  For each $y\in R^j$, set
$\bar{M_y}=\{x\in \widehat{R^i}\ |\ g_{ij}(x,y)\in \bar{N}\}$. Since the map $x\mapsto \bar{N}+g_{ij}(x,y)$ from $\widehat{R^i}$ to $\widehat{R^{i+j}}/\bar{N}$ is a continuous homomorphism with kernel $\bar{M}_y$, we have that $[\widehat{R^i}:\bar{M}_y]$ divides $\widehat{[R^{i+j}}:\bar{N}]$ for every $y\in R^j$. However, $\widehat{R^i}$, being a finitely generated profinite group, possesses only finitely many open subgroups of any given index, which means that the set $\{\bar{M}_y:y\in R^j\}$ is finite. Thus $\bar{M}=\bigcap_{y\in R^j}\bar{M}_y$ is an open subgroup of $\widehat{R}^i$.  Next take $N$ to be the preimage of $\bar{N}$ under the completion map $R^{i+j}\to \widehat{R^{i+j}}$. Proceeding in a fashion similar to above,  we let $P_x=\{y\in R^j\ |\ f_{ij}(x,y)\in N\}$ for each $x\in R^i$, obtaining that $P=\bigcap_{x\in R^i}P_x$ is an open subgroup of $R^j$. 

Now assume $(b, c)\in g_{ij}^{-1}(\bar{N}+a)$. Let $b'$ be an element of the image of $R^i$ in $\widehat{R^i}$ such that $\bar{M}+b=\bar{M}+b'$. For any $m\in \bar{M}$ and $p\in P$, 
$$g_{ij}(m+b',p+c)=g_{ij}(m,p+c)+g_{ij}(b',p+c)\\
=g_{ij}(m,p+c)+g_{ij}(b',p)+g_{ij}(b',c).$$
Since $g_{ij}(m,p+c)\in \bar{N}$, $g_{ij}(b',p)\in \bar{N}$, and $g_{ij}(b',c)\in \bar{N}+a$, we have that $g_{ij}(m+b',p+c)\in \bar{N}+a$. Thus $(\bar{M}+b)\times (P+c)\leq  g_{ij}^{-1}(\bar{N}+a)$. Therefore, $g_{ij}^{-1}(\bar{N}+a)$ is open in $\widehat{R^i}\times R^j$. It follows, then, that $g_{ij}$ is continuous. As a consequence,  we can deduce that $g_{ij}$ is linear in the second component by virtue of its being linear there on a dense subset.
This allows us to extend $g_{ij}$ to a function $h_{ij}:\widehat{R^i}\times \widehat{R^j}\to \widehat{R^{i+j}}$  such that, for each $x\in \widehat{R^i}$, $h_{ij}(x, \_)$ is a continuous homomorphism $\widehat{R^j}\to \widehat{R^{i+j}}$. By reasoning like we did above for $g_{ij}$, we can conclude that $h_{ij}$ is continuous and, therefore, bilinear. The maps $h_{ij}$, then, furnish the desired product on the family of groups $\{R^n\ |\ n\in \mathbb N\}$, the associativity and strict anticommutativity following from the fact that these properties hold on dense subsets.
\vspace{5pt}

(ii). The universal property of the profinite completion yields a family $\{\psi^n:\widehat{R^n}\to \Omega^n\ |\ n\in \mathbb N\}$ of continuous group homomorphisms such that $\psi^n c^n=\phi^n$. Moreover, for all $(x, y)\in \widehat{R^i}\times \widehat{R^j}$, $\psi^{i+j}(xy)=\psi^i(x)\psi^j(y)$, since this identity holds on a dense subset. 

\end{proof}

Now we are prepared to prove formula (2.3).

\begin{proposition} Assume $G$ is an abelian group that is finitely generated with respect to its profinite topology. Then the following two statements hold. 
\vspace{10pt}
 
\noindent (i) The family of profinite abelian groups $\{\widehat{\bigwedge^n G}\ |\ n\geq 0\}$ can be made into a strictly anticommutative graded profinite ring so that the completion maps $c^n: \bigwedge^n G\to \widehat{\bigwedge^n G}$ constitute a graded ring homomorphism.
\vspace{10pt}

\noindent (ii) The graded ring homomorphism $\bigwedge^{\ast} G\to \bigwedge^{\ast} \hat{G}$ arising from $c_G:G\to \hat{G}$ induces a continuous graded profinite ring isomorphism  
$\widehat{\bigwedge^{\ast} G}\to \bigwedge^{\ast} \hat{G}.$

\end{proposition}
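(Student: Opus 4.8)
The plan is to obtain both parts from the apparatus already in place: part (i) will be an immediate application of Lemma 2.6 to the graded ring $R^\ast=\bigwedge^\ast G$, and part (ii) will follow by playing the universal property of Lemma 2.6(ii) against the universal property of the profinite exterior power $\bigwedge^\ast\hat G=\hat{\bigwedge}^\ast\hat G$. The only hypothesis of Lemma 2.6 not handed to us is that each $\bigwedge^n G$ be finitely generated relative to its profinite topology, and establishing this I expect to be the crux of the whole proposition, since the proposition only assumes it in degree one.

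To prove that $\bigwedge^n G$ is finitely generated relative to its profinite topology, I would fix elements $g_1,\dots,g_m\in G$ witnessing that $G$ is (so that their images generate every finite quotient of $G$) and propose the finitely many wedges $g_{i_1}\wedge\cdots\wedge g_{i_n}$ with $i_1<\cdots<i_n$ as witnesses for $\bigwedge^n G$. Given a finite quotient $q\colon\bigwedge^n G\to F$, I must show the images of these wedges generate $F$. The composite $\beta\colon G^n\to F$ is alternating and multilinear, and if $e$ denotes the exponent of $F$, then multilinearity forces $\beta$ to vanish as soon as one coordinate lies in $eG$, so $\beta$ factors through $(G/eG)^n$. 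The pivotal point is that $G/eG$ is finite: every finite quotient of $G/eG$ is a finite quotient of $G$ of exponent dividing $e$, hence a quotient of $\hat G/e\hat G$, which is finite because $\hat G$ is a finitely generated profinite abelian group; thus the finite quotients of the exponent-$e$ group $G/eG$ have bounded order, and a short argument using the primary decomposition and the $p$-adic filtration then shows $G/eG$ itself is finite. Once $G/eG$ is known to be finite, the images $\bar g_1,\dots,\bar g_m$ generate it, so by multilinearity together with the alternating property every value of $\beta$ is an integer combination of the values $\beta(g_{i_1},\dots,g_{i_n})$ on sorted distinct index tuples; since these values generate $F$, the claim follows. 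With this in hand, $\bigwedge^\ast G$ is a strictly anticommutative graded ring with $\bigwedge^0 G=\mathbb Z$ each of whose terms is finitely generated relative to its profinite topology, so Lemma 2.6(i) yields part (i) at once.

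For part (ii) I would first record that $\bigwedge^\ast\hat G=\hat{\bigwedge}^\ast\hat G$ by Proposition 2.2 (as $\hat G$ is a finitely generated profinite abelian group), so $\bigwedge^\ast\hat G$ is a strictly anticommutative graded profinite ring. Feeding the graded ring homomorphism $\phi^\ast\colon\bigwedge^\ast G\to\bigwedge^\ast\hat G$ induced by $c_G$ into Lemma 2.6(ii) produces the asserted continuous map $\psi^\ast\colon\widehat{\bigwedge^\ast G}\to\bigwedge^\ast\hat G$ with $\psi^\ast c^\ast=\phi^\ast$. To see it is an isomorphism I would build an inverse from the universal property of $\bigwedge^\ast\hat G=\hat{\bigwedge}^\ast\hat G$: since part (i) makes $\widehat{\bigwedge^\ast G}$ a strictly anticommutative graded profinite ring, the identity map $\hat G\to\hat G=\widehat{\bigwedge^1 G}$ extends to a unique continuous graded ring homomorphism $\chi^\ast\colon\bigwedge^\ast\hat G\to\widehat{\bigwedge^\ast G}$.

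Finally I would verify that $\psi^\ast$ and $\chi^\ast$ are mutually inverse by working in degree one and invoking the two uniqueness clauses. In degree one, $\psi^1c^1=\phi^1$ with $c^1=\phi^1=c_G$ gives $\psi^1=\mathrm{id}_{\hat G}$ by density of $c_G(G)$, while $\chi^1=\mathrm{id}_{\hat G}$ by construction; hence $\psi^\ast\chi^\ast$ agrees with the identity in degree one and therefore, by the uniqueness in the universal property of $\bigwedge^\ast\hat G$, equals the identity throughout. In the other direction, $\chi^\ast\phi^\ast$ and $c^\ast$ are graded ring homomorphisms $\bigwedge^\ast G\to\widehat{\bigwedge^\ast G}$ agreeing in degree one, so they coincide because $\bigwedge^\ast G$ is generated in degree one; thus $(\chi^\ast\psi^\ast)c^\ast=\chi^\ast\phi^\ast=c^\ast$, and the uniqueness clause of Lemma 2.6(ii) forces $\chi^\ast\psi^\ast=\mathrm{id}$. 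This exhibits $\psi^\ast$ as the required continuous graded profinite ring isomorphism. I anticipate the finiteness of $G/eG$, and hence the finite generation of $\bigwedge^n G$, to be the one step demanding genuine care, the remainder being formal manipulation of the two universal properties.
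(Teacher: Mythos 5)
Your proof is correct and follows essentially the same route as the paper: part (i) via Lemma 2.6 applied to $\bigwedge^{\ast}G$, and part (ii) by pitting the universal property of $\hat{\bigwedge}^{\ast}\hat{G}$ against the uniqueness clause of Lemma 2.6(ii) to produce mutually inverse maps. The one place you go beyond the paper is in actually proving that each $\bigwedge^{n}G$ is finitely generated relative to its profinite topology (via the finiteness of $G/eG$ and the alternating multilinear factorization), a claim the paper simply asserts; your argument for it is sound.
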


\begin{proof} Throughout the proof, we will make repeated use of the fact that $\bigwedge^{\ast}\hat{G}= \hat{\bigwedge}^{\ast}\hat{G}$, which follows from Proposition 2.2. Since $G$ is finitely generated relative to its profinite topology, the same is true for $\bigwedge^n G$ for $n>1$. Hence statement (i) follows by the preceding proposition. Moreover, by the universal property of $\hat{\bigwedge}^\ast \hat{G}$, there is a continuous graded ring homomorphism $\phi^{\ast}: \bigwedge^\ast \hat{G}\to \widehat{\bigwedge^\ast  G}$ such that $\phi^1$ is just the identity map $\hat{G}\to \hat{G}$. In addition, we have a graded ring homomorphism $\bigwedge^{\ast} G\to {\bigwedge}^{\ast} \hat{G}$, which, according to Lemma 2.6(ii), induces a 
continuous graded ring homomorphism $\psi^{\ast}: \widehat{\bigwedge^{\ast} G}\to {\bigwedge}^{\ast} \hat{G}$ such that $\psi^1$ is the identity map $\hat{G}\to \hat{G}$. 
 We claim that $\psi^{\ast}\phi^{\ast}$ is the identity map ${\bigwedge}^{\ast} \hat{G}\to {\bigwedge}^{\ast} \hat{G}$, and that $\phi^{\ast}\psi^{\ast}$ is the identity map $\widehat{\bigwedge^{\ast}  G}\to \widehat{\bigwedge^{\ast} G}$. The first assertion follows immediately from the universal property of $\hat{\bigwedge}^{\ast} \hat{G}$ since $\psi^1\phi^1$ is the identity map $\hat{G}\to \hat{G}$. To verify the second, consider the composition
 
 \begin{displaymath} \begin{CD} \bigwedge^\ast  G @>c^\ast>> \widehat{\bigwedge^\ast  G} @>\phi^{\ast}\psi^{\ast}>> \widehat{\bigwedge^\ast  G}.\\
\end{CD} \end{displaymath}

 \noindent The universal property of the exterior power ensures that this composition is the completion map since that is its form in dimension one. Consequently, by the universal property of the profinite completion, $\phi^{\ast}\psi^{\ast} $ can only be the identity map.
 \end{proof}

In order to deduce formula (2.1) from the above proposition, we still require the fact that the profinite completion of a torsion-free abelian group is itself torsion-free. This property may be established by the same argument used to prove [{\bf 8}, Proposition 2.1].  Notice that for our result, as opposed to the formulation in \cite{kropholler}, it is not necessary to assume that $G$ is residually finite.

\begin{lemma}{\rm (P. Kropholler and J. Wilson)} If $G$ is a torsion-free abelian group, then $\hat{G}$ is also torsion-free. 
\end{lemma}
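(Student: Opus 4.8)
The plan is to prove the equivalent assertion that multiplication by each prime $p$ is injective on $\hat{G}$; this suffices, since any nonzero torsion element of $\hat{G}$ would, after multiplication by a suitable integer, produce a nonzero element of prime order. Fix a prime $p$ and let $\mu_p$ denote multiplication by $p$, acting compatibly on $G$ and on every finite quotient $G/N$. Since the profinite completion of an abelian group is the inverse limit $\varprojlim_N G/N$ over the finite-index subgroups $N$, and since $\varprojlim$ is left exact, I would identify the kernel of $\mu_p$ on $\hat{G}$ with $\varprojlim_N T_N$, where $T_N=\{x\in G/N : px=0\}$ is the $p$-torsion subgroup of $G/N$ and the transition maps are the canonical projections. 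The problem thereby reduces to showing that this inverse limit vanishes.

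Suppose, for contradiction, that $\varprojlim_N T_N$ contains a nonzero element. Then some coordinate $t_{N_0}\in T_{N_0}$ is nonzero; choose $g\in G$ representing $t_{N_0}$, so that $g\notin N_0$ while $pg\in N_0$. For every finite-index subgroup $N\subseteq N_0$, the compatibility of the family provides an element $g_N\equiv g \pmod{N_0}$ with $pg_N\in N$; writing $pg=pg_N-p(g_N-g)$ and noting that $p(g_N-g)\in pN_0$, I obtain $pg\in N+pN_0$. As $N$ ranges over all finite-index subgroups of $N_0$, the subgroups $N+pN_0$ run through exactly the open subgroups of $N_0$ containing $pN_0$, so $pg$ lies in $\bigcap_N (N+pN_0)$, which is precisely the closure of $pN_0$ in the profinite topology of $N_0$.

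The crux of the argument, and the step I expect to be the main obstacle, is to show that $pN_0$ is already closed in this topology. I would establish this by observing that $N_0/pN_0$ has exponent dividing $p$, so that by the Prüfer–Baer theorem it is a (possibly infinite) direct sum of cyclic groups of order $p$; such a group is residually finite, being a direct sum of finite groups. Hence the finite residual of $N_0/pN_0$ is trivial, which is exactly the statement that $pN_0$ equals its closure. Granting this, $pg\in pN_0$, say $pg=pn$ with $n\in N_0$, and the torsion-freeness of $G$ forces $g=n\in N_0$, contradicting $g\notin N_0$. Thus $\varprojlim_N T_N=0$, so $\mu_p$ is injective on $\hat{G}$ for every prime $p$, and $\hat{G}$ is torsion-free. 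I would conclude by remarking that residual finiteness of $G$ is never used: the argument controls the torsion of $\hat{G}$ directly through the finite quotients $G/N$, irrespective of whether the completion map $c_G$ is injective.
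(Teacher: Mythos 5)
Your proof is correct and is essentially the paper's own argument in inverse-limit clothing: both hinge on the identical key fact that $N_0/pN_0$ (the paper's $N/nN$), having bounded exponent, is a direct sum of cyclic groups and hence residually finite, so that $pN_0$ is closed in the profinite topology of $N_0$, after which the compatible family of images of the torsion element forces $pg\in pN_0$ and torsion-freeness of $G$ finishes the job. The preliminary reduction to prime order and the identification of $\ker\mu_p$ with $\varprojlim_N T_N$ are harmless repackagings of the paper's direct manipulation of the images $x+N$ of $\bar{x}$ in the finite quotients.
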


\begin{proof}

Assume $\bar{x}\in \hat{G}$ such that $n\bar{x}=0$ for some integer $n\neq 0$. Let $N$ be a subgroup of finite index in $G$, and let $x\in G$ such that $N+x$ is the image of $\bar{x}$ in $G/N$. Then $nx\in N$. We claim that, in fact,  $nx\in nN$. To show this, suppose otherwise. As an abelian group with finite exponent, $N/nN$ is a direct sum of finite cyclic groups and thus residually finite. Hence $N$ contains a subgroup $M$ of finite index such that $nN\leq M$ and $nx\notin M$. Now let $y\in G$ such that $M+y$ is the image of $\bar{x}$ in $G/M$. Hence $ny\in M$ and $x-y\in N$. 
 It follows that $n(x-y)\in nN\leq M$, so that $nx\in M$, a contradiction. Therefore, $nx\in nN$, which means, since $G$ is torsion-free, that $x\in N$. Since $N$ was an
arbitrary subgroup of finite index in $G$, we can conclude that $\bar{x}=0$.  Consequently, $\hat{G}$ is torsion-free. 

\end{proof}

Now we are prepared to prove formula (2.1). 

\begin{theorem} Let $G$ be a torsion-free abelian group that is finitely generated with respect to its profinite topology. Then, for each $n\geq 0$, the map

$$\widehat{H_n(G)}\to H_n(\hat{G})$$

\noindent induced by $c_G:G\to \hat{G}$ is an isomorphism.
\end{theorem}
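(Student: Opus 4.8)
The plan is to identify both homology groups with exterior powers and then invoke the comparison of exterior powers already established in Proposition 2.7. Concretely, I would realize the map of interest as one edge of the commutative square

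\begin{displaymath} \begin{CD}
\widehat{\bigwedge^n G} @>>> \widehat{H_n(G)} \\
@VVV @VVV \\
\bigwedge^n \hat{G} @>>> H_n(\hat{G})
\end{CD} \end{displaymath}

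in which the top arrow is the profinite completion of Cartan's isomorphism, the left arrow is the isomorphism of Proposition 2.7(ii), the bottom arrow is the isomorphism of Theorem 2.5, and the right arrow is the map induced by $c_G$ (which exists because $H_n(\hat{G})$ is profinite, hence complete, so the comparison map out of $H_n(G)$ factors through $\widehat{H_n(G)}$). Once the square is known to commute, the fact that three of its four maps are isomorphisms forces the remaining one, the map of the theorem, to be an isomorphism too.

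The three ingredient isomorphisms are assembled as follows. First I would upgrade Cartan's theorem (Theorem 2.1) from finitely generated torsion-free abelian groups to arbitrary torsion-free abelian groups; this is necessary because $G$ is only assumed finitely generated relative to its profinite topology and so need not be finitely generated as an abstract group. Since every torsion-free abelian group is the direct limit of its finitely generated subgroups, each of which is free abelian of finite rank, and since both integral homology and the exterior power functor commute with direct limits while Cartan's isomorphism is natural in $G$, passing to the limit yields a natural graded-ring isomorphism $\bigwedge^{\ast} G\cong H_{\ast}(G)$; completing it supplies the top arrow. The bottom arrow is furnished directly by Theorem 2.5, once I observe that $\hat{G}$ is a torsion-free (by Lemma 2.8) finitely generated profinite abelian group, the finite generation being exactly the hypothesis that $G$ is finitely generated relative to its profinite topology. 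The left arrow is Proposition 2.7(ii).

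It then remains to verify that the square commutes, and this is the step I expect to be the main obstacle. All four maps are built in dimension one from $c_G:G\to \hat{G}$ and the identity, and both the abstract and the profinite forms of Cartan's isomorphism are realized by the same Pontryagin-product construction, sending $g_1\wedge\cdots\wedge g_n$ to the product of the classes $[g_i]$ in degree one; granting this, the two composites $\widehat{\bigwedge^n G}\to H_n(\hat{G})$ agree after reduction to one-dimensional data, and the universal properties of the exterior power and of the profinite completion propagate the agreement to all $n$. The delicate point is precisely to confirm that the identification of $H_n(\hat{G})$ with $\bigwedge^n \hat{G}$ produced by Theorem 2.5 is the natural Pontryagin-product map and is compatible with the map induced by $c_G$, so that the completed abstract Cartan isomorphism and the profinite Cartan isomorphism are genuinely intertwined by Proposition 2.7(ii). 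The remaining ingredients, namely the direct-limit extension of Cartan's theorem and the torsion-freeness and finite generation of $\hat{G}$, are routine or already in hand.
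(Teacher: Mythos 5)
Your proposal follows the paper's own proof exactly: the paper deduces this theorem in one line from Cartan's theorem (Theorem 2.1), Theorem 2.5, Proposition 2.7, and Lemma 2.8 --- precisely the four ingredients you assemble into your commutative square. The extra care you take (extending Cartan's theorem by a direct limit to cover $G$ that are not finitely generated as abstract groups, and checking that the identifications are natural and intertwined by $c_G$) addresses details the paper leaves implicit.
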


\begin{proof} By Lemma 2.8, $\hat{G}$ is torsion-free. In view of this, the result follows immediately from Theorem 2.1, Theorem 2.5 and Proposition 2.7.
\end{proof}

\section{Proof of the main theorem}

We begin by defining the class of groups that is the focus of this section.

\begin{definition} {\rm Define $\mathcal{G}$ to be the class of groups $G$ such that, for each $n\geq 0$ and each finite, discrete $\hat{G}$-module $A$, the following two properties hold: 
\vspace{5pt}

(i) the group $H^n(G,A)$ is finite;
\vspace{5pt}

(ii) the map $c_G:G\to \hat{G}$ induces an isomorphism $H^n(\hat{G},A)\to H^n(G,A)$.}
\end{definition}
\vspace{10pt}

Our objective is to prove that every ascending HNN extension of a polycyclic group is in $\mathcal{G}$.
In our proof we will make use of the fact that $\mathcal{G}$ is closed under the formation of the following type of group extension.

\begin{proposition}  Let $1\longrightarrow N\longrightarrow G\longrightarrow Q\longrightarrow 1$ be a group extension such that $N$ is finitely generated in its profinite topology. If $N$ and $Q$ are both in $\mathcal{G}$, then $G$ belongs to $\mathcal{G}$. 
 \end{proposition}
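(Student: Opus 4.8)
The plan is to compare the Lyndon--Hochschild--Serre spectral sequences of the two extensions. The abstract extension $1\to N\to G\to Q\to 1$ yields a first-quadrant spectral sequence $E_2^{p,q}=H^p(Q,H^q(N,A))\Rightarrow H^{p+q}(G,A)$, where $A$ is viewed as a $G$-module through $c_G$. To obtain a comparable profinite spectral sequence I first need a short exact sequence of profinite groups $1\to\hat N\to\hat G\to\hat Q\to 1$; this produces $\mathcal{E}_2^{p,q}=H^p(\hat Q,H^q(\hat N,A))\Rightarrow H^{p+q}(\hat G,A)$. The completion maps $c_N$, $c_G$, $c_Q$ are compatible with the two extensions and hence induce a morphism from the profinite spectral sequence to the abstract one, so everything reduces to showing this morphism is an isomorphism on $E_2$.

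First I would establish the profinite short exact sequence, which I expect to be the main obstacle. Right exactness of profinite completion always gives exactness at $\hat G$ and $\hat Q$, so the real content is that $\hat N\to\hat G$ is injective with image the kernel of $\hat G\to\hat Q$; equivalently, that the profinite topology of $G$ induces on $N$ its full profinite topology. This is precisely where the hypothesis that $N$ is finitely generated in its profinite topology is used: such a group has only finitely many subgroups of each finite index, so any finite-index subgroup $U\trianglelefteq N$ contains the intersection of its finitely many $G$-conjugates, a $G$-invariant subgroup of finite index in $N$. Tracing these $G$-invariant subgroups by finite-index subgroups of $G$ is what matches the two topologies and yields the desired sequence. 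Carrying out this matching is the delicate step; the remainder of the argument is formal.

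Granting the exact sequence, the comparison on $E_2$ proceeds termwise. Since $N\in\mathcal{G}$, condition (ii) gives an isomorphism $H^q(\hat N,A)\xrightarrow{\sim}H^q(N,A)$, and condition (i) guarantees these groups are finite; the isomorphism is compatible with the respective $\hat Q$- and $Q$-actions, so $H^q(N,A)$ is a finite, discrete $\hat Q$-module. This is exactly the point at which the finiteness clause (i) earns its place in the definition of $\mathcal{G}$: it licenses applying the goodness of $Q$ to the coefficient module $H^q(N,A)$. Invoking condition (ii) for $Q\in\mathcal{G}$ then gives $H^p(\hat Q,H^q(\hat N,A))\xrightarrow{\sim}H^p(Q,H^q(N,A))$, so the morphism is an isomorphism on $E_2$ and hence on the abutment, proving $H^n(\hat G,A)\xrightarrow{\sim}H^n(G,A)$, which is condition (ii) for $G$. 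Finally, since each $E_2^{p,q}$ is finite (finiteness of $H^q(N,A)$ from (i) for $N$, then finiteness of $H^p(Q,-)$ from (i) for $Q$) and only finitely many of them contribute to a given total degree, $H^n(G,A)$ is finite, giving condition (i) for $G$ and completing the proof.
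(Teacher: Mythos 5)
Your strategy is exactly the one the paper intends: the paper offers no argument of its own for this proposition, but points to Serre's Exercise 2 of Chapter I of \emph{Galois Cohomology}, whose solution is precisely the Lyndon--Hochschild--Serre comparison you describe, remarking only that Serre's hypothesis that $N$ be finitely generated is used solely to guarantee that $N$ has finitely many subgroups of each given finite index. Your $E_2$-term comparison and the finiteness bookkeeping are correct, including the observation that clause (i) for $N$ is exactly what makes $H^q(N,A)$ an admissible finite discrete $\hat Q$-module to which the goodness of $Q$ can be applied.

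The gap is the step you defer, and it is more than delicate bookkeeping: as you have set it up it cannot be completed from the ingredients you name. The finiteness property of $N$ does produce, inside any finite-index subgroup $U\le N$, a $G$-invariant subgroup $W$ of finite index in $N$; but matching the topologies then requires a finite-index subgroup $V\le G$ with $V\cap N\le W$, and this does \emph{not} follow from any property of $N$. It is here that the hypothesis $Q\in\mathcal{G}$ must be invoked a second time: one applies the goodness of $Q$ (and of its finite-index subgroups) in degrees $1$ and $2$ to the extension $1\to N/W\to G/W\to Q\to 1$ with finite kernel, to show that $N/W$ injects into a finite quotient of $G/W$; this is Serre's Exercise 1 of the same section. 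Some input on $Q$ is genuinely unavoidable: if $Q$ is a finitely presented infinite simple group with $H^2(Q,\mathbb{Z}/2)\neq 0$ (e.g.\ Thompson's group $T$) and $G$ is a non-split central extension of $Q$ by $N=\mathbb{Z}/2$, then $W=1$ is $G$-invariant of finite index in $N$, yet every finite-index subgroup of $G$ contains $N$, so the profinite topology of $G$ does not induce that of $N$ and $\hat N\to\hat G$ is not injective. So the architecture of your proof is right, but the identification of $\hat N$ with $\ker(\hat G\to\hat Q)$ rests on the low-dimensional goodness of $Q$, not only on the finiteness property of $N$, and supplying that argument is the substance of the proposition.
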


The essential details of the proof of the above proposition are provided by Serre [{\bf 14}, Exercise 2, Chapter 2], though with one important difference between the hypotheses: in place of our condition on $N$, Serre assumes that $N$ is finitely generated as an abstract group.  An examination of Serre's argument, however, reveals that all that is really required is that $N$ has only finitely many subgroups of any given finite index, a property that also holds in the presence of our weaker condition on $N$. 

In analysing ascending HNN extensions, the following species of direct limit shall play an important role.

\begin{definition}
{\rm If $G$ is a group and $\phi:G\to G$ an endomorphism, then $G_{\phi}$ is the direct limit of the sequence
$$G\stackrel{\phi}{\to} G\stackrel{\phi}{\to} G\stackrel{\phi}{\to} \cdots.$$}
\end{definition}
\vspace{10pt}

The above variety of direct limit enjoys the following property, which will be highly significant for our proof that every ascending HNN extension of a polycyclic group is in $\mathcal{G}$. 

\begin{lemma}
If $G$ is a finitely generated group and $\phi:G\to G$ an endomorphism, then $G_{\phi}$ is finitely generated relative to its profinite topology.  
\end{lemma}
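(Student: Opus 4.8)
The plan is to exhibit an explicit finite generating set and verify it works against every finite quotient. Write $G_\phi$ as the union $\bigcup_{n\geq 0}\iota_n(G)$ of the images of the canonical maps $\iota_n\colon G\to G_\phi$ coming from the direct system $G\stackrel{\phi}{\to}G\stackrel{\phi}{\to}\cdots$; these satisfy $\iota_n=\iota_{n+1}\circ\phi$, so the subgroups $\iota_n(G)$ form an ascending chain whose union is all of $G_\phi$. Fix generators $g_1,\dots,g_d$ of the finitely generated group $G$ and set $x_i=\iota_0(g_i)$. Since the subgroup generated by $\epsilon(x_1),\dots,\epsilon(x_d)$ in any finite quotient $\epsilon\colon G_\phi\twoheadrightarrow F$ is exactly $\epsilon(\iota_0(G))$, the whole lemma reduces to the single claim that \emph{every} finite quotient is already surjected onto by the bottom stage, i.e. $\epsilon(\iota_0(G))=F$ for every epimorphism $\epsilon$ onto a finite group $F$.

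To prove this claim I would pass to the composites $\epsilon_n:=\epsilon\circ\iota_n\colon G\to F$. The compatibility $\iota_n=\iota_{n+1}\circ\phi$ gives the recurrence $\epsilon_n=\epsilon_{n+1}\circ\phi$, and their images $K_n:=\epsilon(\iota_n(G))$ form an ascending chain whose union is $F$ (as $\epsilon$ is onto and $G_\phi=\bigcup_n\iota_n(G)$). Because $F$ is finite this chain stabilizes: there is an $N$ with $K_n=F$ for all $n\geq N$. The goal is then to push surjectivity down to $\epsilon_0$, and here I would exploit that $G$ is finitely generated, so that $\mathrm{Hom}(G,F)$ is finite. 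Consequently the infinite sequence $(\epsilon_n)_{n\geq N}$ takes only finitely many values, so some homomorphism recurs at infinitely many indices $m_1<m_2<\cdots$, all $\geq N$. Choosing $a=m_1$ and $b=m_i$ with $P:=b-a\geq N$, the equality $\epsilon_a=\epsilon_b$ propagates \emph{backward} through the recurrence $\epsilon_{n-1}=\epsilon_n\circ\phi$ (applying $-\circ\phi$ repeatedly down to index $0$), yielding $\epsilon_0=\epsilon_P$. Since $P\geq N$ the homomorphism $\epsilon_P$ is onto, hence so is $\epsilon_0$, giving $K_0=\epsilon(\iota_0(G))=F$ as required.

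The step I expect to be the crux is precisely this last one: a priori the chain $K_0\subseteq K_1\subseteq\cdots$ could climb through many proper subgroups before reaching $F$, so there is no visible uniform bound forcing the bottom stage to surject, and the naive generating set would appear to fail. The resolution is that the thread $(\epsilon_n)$ is governed by the pullback map $\phi^\ast$ on the finite set $\mathrm{Hom}(G,F)$; one may phrase the argument either via the pigeonhole recurrence above or, equivalently, by observing that $\epsilon_0$ lies in the eventual image $\bigcap_k\mathrm{im}((\phi^\ast)^k)$, on which $\phi^\ast$ acts bijectively, so the thread is periodic and its image is constant. Either framing forces $K_0=F$, and everything else—that $G$ being finitely generated makes $\mathrm{Hom}(G,F)$ finite, and that the images of $g_1,\dots,g_d$ generate $\iota_0(G)$—is routine. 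Note that injectivity of $\phi$ is never used, so the argument applies to an arbitrary endomorphism, as the statement requires.
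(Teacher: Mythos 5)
Your argument is correct. The reduction is the same as the paper's: both proofs boil the lemma down to showing that, for any epimorphism $\epsilon\colon G_\phi\to F$ with $F$ finite, the bottom copy $\iota_0(G)$ already surjects onto $F$, and both get this from a finiteness phenomenon supplied by the finite generation of $G$. But the executions differ. You work entirely with the thread of homomorphisms $\epsilon_n=\epsilon\circ\iota_n$ in the finite set $\mathrm{Hom}(G,F)$, apply pigeonhole to find a recurrence $\epsilon_a=\epsilon_b$ with $b-a\geq N$, and propagate it down via $\epsilon_n=\epsilon_{n+k}\circ\phi^k$ to get $\epsilon_0=\epsilon_{b-a}$, which is onto; this is clean, short, and I checked the index bookkeeping ($\epsilon_0=\epsilon_a\circ\phi^a=\epsilon_b\circ\phi^a=\epsilon_{b-a}$) -- it is right, and you correctly never need injectivity of $\phi$. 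The paper instead works with the kernels $N_i=\ker\epsilon_i$, uses the fact that a finitely generated group has only finitely many subgroups of each finite index to find a recurring kernel $N$, and then constructs the finite-index $\phi$-invariant subgroup $M=\bigcap_{j\geq 0}\phi^{-j}(N)$ with $M_{\phi'}\leq\ker\epsilon$ and $G_\phi=\iota_0(G)\cdot M_{\phi'}$. That extra construction is not wasted effort in the paper: the subgroup $M$ and the observation that $\phi$ induces an automorphism of $G/M$ are reused verbatim in the proof of Theorem 3.6 (to trivialize the action on a finite coefficient module and to handle the non-torsion-free abelian case), so the paper's longer route buys a tool it needs later, while yours buys brevity for the lemma in isolation.
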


\begin{proof} We have that $G_{\phi}$ is the direct limit of the sequence
\begin{equation}
G\stackrel{\phi}{\to} G\stackrel{\phi}{\to} G\stackrel{\phi}{\to} \cdots.
\end{equation}
Let $\epsilon : G_{\phi}\to F$ be an epimorphism, where $F$ is a finite group. For each $i\in \mathbb N$, 
let $N_i$ be the subgroup of $G$ formed by intersecting $\mbox{Ker}\ \epsilon$ with the copy of $G$ occupying the $i$-th spot in the sequence (3.1). We have, then, that $\phi(N_i)\leq N_{i+1}$ for all $i\in \mathbb N$, and that $\phi$ induces a monomorphism $G/N_i\to G/N_{i+1}$ for all $i\in \mathbb N$.  It follows from the finiteness of $F$ that there exists $k\in \mathbb N$ such that the map $\phi:G\to  G$ induces an isomorphism $G/N_i\to G/N_{i+1}$ for all $i\geq k$. Moreover, invoking the fact that $G$ has only finitely many subgroups of any given finite index, we can conclude that there is an $l\geq k$ such that $N_i=N_l$ for infinitely many $i\geq k$. To simplify the notation, we let $N=N_l$. 

For each nonnegative integer $j$, let
$$\phi^{-j}(N)=\{ x\in G : \phi^j(x)\in N \},$$
where $\phi^0$ is understood to be the identity map from $G$ to $G$.
Now set $M=\bigcap_{j=0}^{\infty}\phi^{-j}(N)$. It is easy to see that $M\leq N_i$ for all $i\in \mathbb N$,
$M\unlhd G$, and $\phi(M)\leq M$. We claim that, in addition, 
$[G:M]<\infty$ and the map $G/M\to G/M$ induced by $\phi$ is an isomorphism. To establish the former assertion, we first observe that, for each $j\geq 0$, $[G:\phi^{-j}(N)]\leq [G : N].$ Since 
$G$ has only finitely many subgroups with index $\leq [G:N]$, it follows that there are only finitely many subgroups of the 
form  $\phi^{-j}(N)$ for $j\geq 0$. Therefore, $M$, as the intersection of finitely many subgroups with finite index, has finite index. Turning now to  prove our second assertion about $M$,  we let $x\in G$ such that $\phi(x)\in M$. From the definition of $N$, we have that, for some $n>0$,  $\phi^n(N)\leq N$ and $\phi^n$ induces an isomorphism 
$G/N\to G/N$. 
Also, for any $j\geq 0$, $\phi^{n+j}(x)\in N$, implying that $\phi^j(x)\in N$. Hence $x\in M$. Therefore, the map $G/M\to G/M$ induced by $\phi$ is an isomorphism. 

Now let $\phi'$ be the map $M\to M$ induced by $\phi$. Treating $M_{\phi'}$ as a subgroup of $G_{\phi}$, we have $M_{\phi'}\leq \mbox{Ker}\ \epsilon$. Moreover, any element of $G_{\phi}$ is congruent modulo $M_{\phi'}$ to an element of the first $G$ in the sequence (3.1). It follows, then, that the image of the first copy of $G$ under $\epsilon$ is the entire group $F$. Consequently, we can conclude that $G_{\phi}$ is finitely generated relative to its profinite topology.  
\end{proof}

Below we establish the connection between the groups $G\ast_{\phi}$ and $G_{\phi}$.

\begin{lemma} If $G$ is a group and $\phi:G\to G$ an endomorphism, then   
$$G\ast_{\phi}\cong G_{\phi}\rtimes \mathbb Z.$$
\end{lemma}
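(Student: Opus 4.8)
The plan is to construct an explicit isomorphism by exhibiting $G_\phi$ as a normal subgroup of $G\ast_\phi$ and showing that the quotient is infinite cyclic, generated by the image of $t$, with the extension splitting. First I would recall the concrete description of $G_\phi$: as the direct limit of $G\stackrel{\phi}{\to}G\stackrel{\phi}{\to}\cdots$, its elements are represented by pairs $(g,i)$ with $g$ in the $i$-th copy of $G$, under the identification $(g,i)\sim(\phi(g),i+1)$. Inside $G\ast_\phi$, the defining relation $t^{-1}gt=\phi(g)$ rearranges to $tgt^{-1}$ being a ``$t$-conjugate'' of $g$; more usefully, $t^{i}g t^{-i}$ behaves like the element ``$g$ sitting in the $i$-th copy.'' So the natural candidate map sends the class of $(g,i)$ to $t^{i}g t^{-i}\in G\ast_\phi$.

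The key steps, in order, are as follows. \emph{Step 1:} Verify that $\iota\colon G_\phi\to G\ast_\phi$ given by $[(g,i)]\mapsto t^{i}g t^{-i}$ is a well-defined homomorphism. Well-definedness uses exactly the HNN relation: $t^{i}g t^{-i}=t^{i+1}\phi(g)t^{-(i+1)}$ because $t^{-1}gt=\phi(g)$ gives $g=t\phi(g)t^{-1}$. \emph{Step 2:} Define a retraction-style homomorphism $G\ast_\phi\to\mathbb{Z}$ by sending every element of $G$ to $0$ and $t$ to $1$; this is well-defined since the relators have exponent sum zero in $t$. Its kernel $K$ contains the image of $\iota$, and I would identify $K$ with that image by a normal-form argument: every element of $G\ast_\phi$ can be written as $t^{-a}\,g\,t^{b}$ with $g\in G$ (using the relations to push all positive and negative powers of $t$ to the outside), and the $t$-exponent-sum-zero elements are precisely the conjugates $t^{-i}g t^{i}$, i.e.\ the image of $\iota$. \emph{Step 3:} Check that $\iota$ is injective, which follows from the universal property of the direct limit together with the fact that the canonical maps $G\to G\ast_\phi$ on each copy are injective (this is Britton's lemma for ascending HNN extensions, or equivalently the standard fact that $G$ embeds in $G\ast_\phi$). \emph{Step 4:} Observe that conjugation by $t$ induces an automorphism of $K\cong G_\phi$, giving the semidirect product structure $G\ast_\phi=K\rtimes\langle t\rangle\cong G_\phi\rtimes\mathbb{Z}$, with the splitting provided by $t\mapsto t$.

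The main obstacle I expect is Step 2, specifically proving that the $t$-exponent-zero kernel is \emph{exactly} the image of $\iota$ and no larger. This amounts to establishing a normal form for elements of $G\ast_\phi$: one must show that relations permit writing any word as $t^{-a}g\,t^{b}$ and that such an element lies in $K$ iff $a=b$, whereupon it equals $t^{-a}g\,t^{a}=\iota([(g,-a)])$ after reindexing. The subtlety is that $\phi$ need only be an endomorphism here (not a monomorphism), so I would avoid invoking Britton's lemma in its injective form and instead argue directly from the presentation: the map to $\mathbb{Z}$ is manifestly well-defined, and the equality $K=\operatorname{im}\iota$ reduces to the observation that conjugating $G$-elements by powers of $t$ and multiplying stays within $\operatorname{im}\iota$ because $\operatorname{im}\iota$ is visibly a subgroup closed under $t$-conjugation. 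Once $K=\operatorname{im}\iota$ is in hand, the remaining verifications are routine, and the semidirect product decomposition follows immediately.
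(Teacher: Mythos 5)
Your overall route is the same as the paper's: realize the image of $G_{\phi}$ as the normal subgroup $\bigcup_{i\geq 0}t^iGt^{-i}$ of $G\ast_{\phi}$, identify it with the kernel of the exponent-sum retraction onto $\langle t\rangle\cong\mathbb Z$, and split off $\langle t\rangle$. Before addressing the main issue, note a sign slip: since $gt=t\phi(g)$ pushes positive powers of $t$ to the \emph{left} and $t^{-1}g=\phi(g)t^{-1}$ pushes negative powers to the right, the normal form is $t^{a}gt^{-b}$ with $a,b\geq 0$, not $t^{-a}gt^{b}$; with your signs the exponent-zero elements $t^{-i}gt^{i}=\phi^i(g)$ all lie in the image of $G$, which is not the image of $\iota$ (that image is the strictly larger ascending union $\bigcup_i t^iGt^{-i}$).

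The substantive gap is Step 3. The lemma is stated, and used in the paper, for an \emph{arbitrary} endomorphism $\phi$, and in that generality the canonical map $G\to G\ast_{\phi}$ is not injective: if $\phi(g)=1$ then $g=t\phi(g)t^{-1}=1$ in $G\ast_{\phi}$, so everything in $\bigcup_j\ker\phi^j$ dies. Britton's lemma in the form you cite is therefore unavailable, and "the standard fact that $G$ embeds in $G\ast_{\phi}$" is false here. What is true is that the induced map $\iota\colon G_{\phi}\to G\ast_{\phi}$ is injective, because the direct limit already kills exactly $\bigcup_j\ker\phi^j$ in each copy of $G$; but that requires a different argument than the one you give. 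The cleanest fix is to construct the inverse homomorphism: use the presentation of $G\ast_{\phi}$ to define a map to $G_{\phi}\rtimes\mathbb Z$ sending $g$ to its class in the $0$-th copy and $t$ to the generator of $\mathbb Z$, acting on $G_{\phi}$ by the shift automorphism (which \emph{is} bijective on the limit even when $\phi$ is not); checking that this is inverse on generators to the map determined by $\iota$ and $t\mapsto t$ gives the isomorphism and, in particular, the injectivity of $\iota$. The paper's own proof is admittedly terse on this same point -- it asserts that the commutative ladder of maps $\theta_i(g)=t^igt^{-i}$ "reveals" $G_{\phi}\cong\bigcup_i t^iGt^{-i}$ -- but your proposal, by explicitly resting the injectivity step on an embedding $G\hookrightarrow G\ast_{\phi}$ that fails for non-injective $\phi$, would break in the generality the paper requires.
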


\begin{proof} Each element of $G\ast_{\phi}$ can be written in the form $t^igt^{-j}$, where $i$ and $j$ are nonnegative integers and $g\in G$. Thus $G\ast_{\phi}$  is the product of the normal subgroup 
$\bigcup_{i=0}^{\infty} t^iGt^{-i}$ with the subgroup $\langle t\rangle$. Moreover, the commutative diagram

$$\begin{CD}
G@>\phi>> G@>\phi>> G@>\phi>> \cdots\\
@VV\theta_0V  @VV\theta_1V @V\theta_2VV\\
G@>\subset>> tGt^{-1}@>\subset>> t^2Gt^{-2}@>\subset>> \cdots,\\
\end{CD}$$
where $\theta_i(g)=t^igt^{-i}$, reveals that $G_{\phi}\cong \bigcup_{i=0}^{\infty} t^iGt^{-i}$. Hence the result follows.
\end{proof}

The above decomposition, combined with Lemma 3.2 and Proposition 3.1, yields the following corollary.

\begin{corollary}  Assume $G$ is a finitely generated group and $\phi:G\to G$ is an endomorphism. If $G_{\phi}$ is in $\mathcal{G}$, then $G\ast_{\phi}$ is also in $\mathcal{G}$.
\end{corollary}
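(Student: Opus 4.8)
The plan is to read the corollary off directly from the structural results already in hand, with the cyclic quotient $\mathbb{Z}$ supplying the only genuinely new input. By Lemma 3.3 the group $G\ast_{\phi}$ is isomorphic to $G_{\phi}\rtimes \mathbb{Z}$, so there is a (split) short exact sequence
$$1\longrightarrow G_{\phi}\longrightarrow G\ast_{\phi}\longrightarrow \mathbb{Z}\longrightarrow 1.$$
I would feed this extension into Proposition 3.1, taking $N=G_{\phi}$ and $Q=\mathbb{Z}$; once the hypotheses of that proposition are verified, the conclusion $G\ast_{\phi}\in\mathcal{G}$ is immediate.

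First I would check that $N=G_{\phi}$ meets the finiteness requirement of Proposition 3.1. This is exactly the content of Lemma 3.2: since $G$ is finitely generated, $G_{\phi}$ is finitely generated relative to its profinite topology, and so has only finitely many subgroups of each finite index. That $G_{\phi}\in\mathcal{G}$ is precisely the standing hypothesis of the corollary, so the normal subgroup is taken care of. It then remains to confirm that the quotient $Q=\mathbb{Z}$ lies in $\mathcal{G}$, and this is where I expect the (very modest) work to sit, everything else being a direct citation. For condition (i) of the definition of $\mathcal{G}$ I would use the length-one free resolution $0\to \mathbb{Z}[\mathbb{Z}]\xrightarrow{\,t-1\,}\mathbb{Z}[\mathbb{Z}]\to\mathbb{Z}\to 0$, which shows, for finite $A$, that $H^{0}(\mathbb{Z},A)$ and $H^{1}(\mathbb{Z},A)$ are subquotients of $A$, hence finite, while $H^{n}(\mathbb{Z},A)=0$ for $n\geq 2$. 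For condition (ii), the comparison $H^{n}(\hat{\mathbb{Z}},A)\to H^{n}(\mathbb{Z},A)$ is the classical fact that $\mathbb{Z}$ is ``good'': both groups have cohomological dimension one, and, because $\mathbb{Z}$ is dense in $\hat{\mathbb{Z}}$ and $A$ is finite, the induced maps on $H^{0}$ and $H^{1}$ are isomorphisms. Alternatively, one simply invokes the remark from the introduction that polycyclic groups (of which $\mathbb{Z}$ is the simplest) are good, together with the finiteness just computed.

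With $N,Q\in\mathcal{G}$ and $N$ finitely generated in its profinite topology, Proposition 3.1 then delivers $G\ast_{\phi}\in\mathcal{G}$, completing the argument. The proof has no serious obstacle: its whole force is borrowed from Lemma 3.3, Lemma 3.2, and Proposition 3.1. The one conceptual point worth flagging is why Proposition 3.1, rather than Serre's original extension result, is the appropriate tool here. The normal subgroup $G_{\phi}$ is in general \emph{not} finitely generated as an abstract group, so Serre's hypothesis fails; the weakening to ``finitely generated relative to the profinite topology,'' made legitimate for $G_{\phi}$ precisely by Lemma 3.2, is exactly what allows the extension argument to go through.
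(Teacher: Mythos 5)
Your proof is correct and follows exactly the route the paper takes: decompose $G\ast_{\phi}\cong G_{\phi}\rtimes\mathbb{Z}$ via Lemma 3.3, use Lemma 3.2 to verify that $G_{\phi}$ is finitely generated relative to its profinite topology, and apply Proposition 3.1 with quotient $\mathbb{Z}$ (which is in $\mathcal{G}$). The paper leaves all of this implicit in a one-line remark, so your explicit verification that $\mathbb{Z}\in\mathcal{G}$ and your observation about why Serre's original hypothesis would fail are welcome elaborations rather than deviations.
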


Before proving our main theorem, we state a universal coefficient theorem for profinite groups. Rather than provide a proof, we refer the reader to the proof of the abstract version in [{\bf 15}, Theorem 3.6.5, Exercise 6.1.5], as it can easily be translated to the profinite context. 

\begin{theorem} Let $\Gamma$ be a profinite group and $A$ a trivial discrete $\Gamma$-module. Then, for any $n\geq 1$, there is an exact sequence

\begin{displaymath} \begin{CD}
0 @>>> {\rm Ext}(H_{n-1}(\Gamma), A) @>>> H^n(\Gamma,A) @>>> {\rm Hom}(H_n(\Gamma), A) @>>>
0\end{CD} \end{displaymath}
\end{theorem}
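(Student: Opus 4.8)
The plan is to transcribe into the profinite category the standard derivation of the universal coefficient sequence for group cohomology with trivial coefficients. Throughout, let $\hat{\mathbb{Z}}[[\Gamma]]$ denote the complete group algebra of $\Gamma$, and work in the category of profinite $\hat{\mathbb{Z}}[[\Gamma]]$-modules. First I would fix a resolution $P_\bullet\to \hat{\mathbb{Z}}$ of the trivial module by free profinite $\hat{\mathbb{Z}}[[\Gamma]]$-modules; such resolutions exist by the standard theory of [{\bf 10}]. Setting $C_\bullet := \hat{\mathbb{Z}}\,\hat{\otimes}_{\hat{\mathbb{Z}}[[\Gamma]]} P_\bullet$, the profinite homology $H_n(\Gamma)$ is by definition the homology $H_n(C_\bullet)$ of this complex of profinite abelian groups.

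Second, since $A$ is a \emph{trivial} discrete module, the adjunction between the coinvariants functor and the inclusion of trivial modules supplies a natural isomorphism of cochain complexes ${\rm Hom}_{\hat{\mathbb{Z}}[[\Gamma]]}(P_\bullet, A)\cong {\rm Hom}(C_\bullet, A)$, where ${\rm Hom}$ now denotes continuous homomorphisms of profinite abelian groups into the discrete module $A$. Taking cohomology gives $H^n(\Gamma, A)\cong H^n({\rm Hom}(C_\bullet, A))$, so the theorem collapses to a purely homological statement about the single complex $C_\bullet$.

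Third, I would run the classical splitting argument on $C_\bullet$. Each $C_n$ is a free profinite $\hat{\mathbb{Z}}$-module, being the $\Gamma$-coinvariants of a free $\hat{\mathbb{Z}}[[\Gamma]]$-module. Writing $Z_n, B_n\subseteq C_n$ for the closed submodules of cycles and boundaries, one splices together the short exact sequences $0\to Z_n\to C_n\to B_{n-1}\to 0$ and $0\to B_n\to Z_n\to H_n(\Gamma)\to 0$; applying the contravariant functor ${\rm Hom}(-, A)$ and chasing the resulting diagram yields the three-term sequence, with the ${\rm Ext}$ term produced by the second family and the ${\rm Hom}$ term by evaluation on homology, precisely as in [{\bf 15}, Theorem 3.6.5]. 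Naturality in $A$ is automatic from the functoriality of the construction.

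The step I expect to be the main obstacle---and the only point where the translation from the abstract proof is not purely formal---is the verification that the submodules $B_{n-1}$ and $Z_n$ of $C_n$ are \emph{projective} profinite $\hat{\mathbb{Z}}$-modules, so that the first sequence splits continuously and ${\rm Hom}(-, A)$ preserves its exactness. In the abstract setting this is the assertion that subgroups of free abelian groups are free, valid because $\mathbb{Z}$ is a PID. The profinite analog rests on the decomposition $\hat{\mathbb{Z}}\cong \prod_p \mathbb{Z}_p$ together with the fact that a torsion-free profinite $\mathbb{Z}_p$-module is free: a closed submodule of a free, hence torsion-free, profinite $\hat{\mathbb{Z}}$-module is again torsion-free and therefore projective. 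Once this hereditary-type property is secured, the remaining manipulations are formal and the argument concludes as in the abstract case.
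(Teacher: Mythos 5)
Your proposal is correct and follows essentially the same route the paper intends: the paper gives no proof of this theorem, referring instead to the abstract argument in [{\bf 15}] and asserting it translates to the profinite context, and your write-up is precisely that translation (free profinite resolution, reduction to the coinvariant complex via the trivial-module adjunction, and the classical splicing/splitting argument). You also correctly isolate and settle the one genuinely non-formal point, namely that closed submodules of free profinite $\hat{\mathbb Z}$-modules are projective, which holds because a profinite abelian group is projective exactly when it is torsion-free (dual to divisibility of its Pontryagin dual).
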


We now have everything in place to prove our principal result.

\begin{theorem} If $G$ is a polycyclic group and $\phi:G\to G$ an endomorphism,  then 
$G\ast_{\phi}$ is in the class $\mathcal{G}$.
\end{theorem}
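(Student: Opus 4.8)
The plan is to reduce the whole statement, via Corollary 3.4, to the single claim that $G_\phi$ lies in $\mathcal{G}$, and then to prove that claim by induction on the derived length $d$ of $G$. Since a polycyclic group is finitely generated, Corollary 3.4 applies and shows that $G_\phi\in\mathcal{G}$ already forces $G\ast_\phi\in\mathcal{G}$; thus the theorem follows once $G_\phi\in\mathcal{G}$ is established. Throughout I would freely use that a finite group coincides with its own profinite completion and hence belongs to $\mathcal{G}$, and that a finite-index subgroup of a group finitely generated in its profinite topology is again of this kind.

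The inductive step ($d\geq 2$) should be routine, as the introduction promises. I would take $N=G^{(d-1)}$, the last nontrivial term of the derived series; it is abelian, and being fully invariant it satisfies $\phi(N)\subseteq N$, so $\phi$ restricts to an endomorphism of $N$ and induces one, $\bar\phi$, on $Q=G/N$. Both $N$ and $Q$ are polycyclic, and $Q$ has derived length $d-1$. Because direct limits preserve exactness, passing to the limit in the system defining $G_\phi$ yields a short exact sequence $1\to N_\phi\to G_\phi\to Q_{\bar\phi}\to 1$. Here $N_\phi$ is finitely generated in its profinite topology by Lemma 3.2, it belongs to $\mathcal{G}$ by the abelian base case, and $Q_{\bar\phi}\in\mathcal{G}$ by the inductive hypothesis, so Proposition 3.1 delivers $G_\phi\in\mathcal{G}$.

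The real content is the base case, where $G$ is finitely generated abelian. First I would strip off torsion: the torsion subgroup of $G_\phi$ is the direct limit of the finite torsion subgroup $T\leq G$ under $\phi$, hence finite, and the corresponding quotient is $(G/T)_{\bar\phi}$ with $G/T$ free abelian. As finite groups lie in $\mathcal{G}$, Proposition 3.1 reduces us to the case $G=\mathbb{Z}^r$. In that case $G_\phi$ is torsion-free (an element killed by $n$ is represented by some $x$ with $\phi^j(nx)=0$, whence $\phi^j(x)=0$ and the element is zero), and it is finitely generated in its profinite topology by Lemma 3.2 — exactly the hypotheses of Theorem 2.9.

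It remains to deduce $G_\phi\in\mathcal{G}$ from Theorem 2.9. For trivial finite coefficients $A$ this follows by comparing the abstract universal coefficient sequence with its profinite counterpart (Theorem 3.5): since $A$ is finite, $\mathrm{Hom}(H_n(G_\phi),A)$ and $\mathrm{Ext}(H_{n-1}(G_\phi),A)$ depend only on the completions $\widehat{H_\ast(G_\phi)}\cong H_\ast(\widehat{G_\phi})$, so the two sequences have isomorphic outer terms and a five-lemma argument gives condition (ii), while condition (i) holds because continuous $\mathrm{Hom}$ and $\mathrm{Ext}$ from the finitely generated profinite group $H_\ast(\widehat{G_\phi})$ into a finite group are finite. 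To upgrade from trivial to arbitrary finite coefficients I would invoke the standard reduction underlying Proposition 3.1 (Serre's exercise): given a finite $\widehat{G_\phi}$-module $A$, let $H$ be the finite-index subgroup acting trivially, embed $A$ into $\mathrm{Coind}_H^{G_\phi}A$, and run a dimension-shifting comparison of the long exact cohomology sequences, using Shapiro's lemma to reduce the coinduced module to trivial coefficients on $H$; this is legitimate because every finite-index subgroup of $G_\phi$ is again torsion-free abelian and finitely generated in its profinite topology, so the trivial-coefficient case just proved applies to it. The main obstacle is concentrated entirely here: marrying this coefficient reduction to the homology computation of Section 2, with the attendant care over continuous versus abstract $\mathrm{Hom}$ and $\mathrm{Ext}$ for finite modules.
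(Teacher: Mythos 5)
Your argument is correct, and its skeleton coincides with the paper's: reduce to $G_\phi$ via Corollary 3.4, induct on derived length, and settle the core of the abelian base case by comparing the abstract and profinite universal coefficient sequences through Theorem 2.9. You diverge in the two reductions surrounding that core. For torsion, the paper does not pass to the quotient $(G/T)_{\bar\phi}$; it passes to a torsion-free subgroup $M=\bigcap_{j\geq 0}\phi^{-j}(N)$ of finite index in $G$ satisfying $\phi(M)\leq M$ (reusing the construction from the proof of Lemma 3.2), obtains $1\to M_{\phi'}\to G_\phi\to Q_{\phi''}\to 1$ with finite quotient, and compares the Lyndon--Hochschild--Serre spectral sequences of this extension and of its profinite analogue; your observation that the torsion subgroup of $G_\phi$ is finite, so that Proposition 3.1 applies directly to $1\to T_\phi\to G_\phi\to (G/T)_{\bar\phi}\to 1$, is cleaner and avoids the spectral sequence. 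For non-trivial coefficients the paper uses the same $\bigcap_{j}\phi^{-j}(\cdot)$ device to manufacture a trivializing finite-index subgroup of the special form $M_{\phi'}$, to which Lemma 3.2 applies verbatim, and again runs the two spectral sequences; you instead take an arbitrary trivializing subgroup $H$ of finite index and do Shapiro plus dimension shifting, which obliges you to know that $H$ inherits finite generation in its profinite topology --- true, but it rests on the facts that an open subgroup of a finitely generated profinite group is finitely generated and that a dense subgroup of such a group contains a finite set of topological generators, neither of which the paper needs to quote. One small correction: the outer terms of the two universal coefficient sequences are not isomorphic because ``$\mathrm{Hom}$ and $\mathrm{Ext}$ into a finite group depend only on the completion''; that is true for $\mathrm{Hom}$ but false for abstract $\mathrm{Ext}$ in general (e.g.\ $\mathrm{Ext}(\mathbb Z(p^{\infty}),\mathbb Z/p)\neq 0$ although the quasicyclic group has trivial profinite completion). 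What actually happens, and what the paper proves, is that both $\mathrm{Ext}$ terms vanish: the abstract one because $H_{n-1}(G_\phi)$ is torsion-free (Lemma 3.8), the profinite one because $H_{n-1}(\widehat{G_\phi})$ is a finitely generated free profinite abelian group. Your choice of $G^{(d-1)}$ rather than the derived subgroup in the inductive step is an immaterial variant.
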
 

\begin{proof} By Corollary 3.4, it suffices to show that $G_{\phi}$ belongs to $\mathcal{G}$. We prove this assertion by induction on the length of the derived series of $G$.  First assume $G$ is abelian. Taking $A$ to be a finite, discrete $\widehat{G_{\phi}}$-module, we wish to establish the following two properties: 

(i) $H^n(G_{\phi},A)$ is finite;

(ii) $H^n(\widehat{G_{\phi}},A)\cong H^n(G_{\phi},A)$ for all $n\geq 0$. 

\noindent Before proving (i) and (ii) in general, we treat the special case where $G$ is torsion-free and the action of $G_{\phi}$ on $A$ is trivial. The two properties are clearly true for $n=0$, so we will assume that $n\geq 1$. In this case, the universal coefficient formulas yield the commutative diagram

\begin{equation} \begin{CD}
0 @>>> {\rm Ext}(H_{n-1}(\widehat{G_{\phi}}), A) @>>> H^n(\widehat{G_{\phi}},A) @>>> {\rm Hom}(H_n(\widehat{G_{\phi}}), A) @>>>0\\
&& @VVV @VVV @VVV && \\
0 @>>> {\rm Ext}(H_{n-1}(G_{\phi}), A) @>>> H^n(G_{\phi},A) @>>> {\rm Hom}(H_n(G_{\phi}), A) @>>>0
\end{CD} \end{equation}
with exact rows. According to Lemmas 2.8 and 3.2, $\widehat{G_{\phi}}$ must be torsion-free and topologically finitely generated. Hence $\widehat{G_{\phi}}$ is a direct sum of finitely many infinite procyclic groups, which means, by the K\"unneth formula, that the same property holds for $H_{n-1}(\widehat{G_{\phi}})$. As a result, ${\rm Ext}(H_{n-1}(\widehat{G_{\phi}}), A)=0$. 
Furthermore, $H_{n-1}(G)$ is torsion-free, which implies, since homology commutes with direct limits,  that $H_{n-1}(G_{\phi})$ is also torsion-free. Because $A$ is finite, this yields that
${\rm Ext}(H_{n-1}(G_{\phi}), A)=0$ (see Lemma 3.8 below). In addition, it follows from Theorem 2.9 that the third vertical map in (3.2) is an isomorphism. Therefore, property (ii) holds.  Also, since $H_n(\widehat{G_{\phi}})$ is a finitely generated profinite group, property (i) is true. 

Next we establish properties (i) and (ii) without the restriction that $A$ is a trivial $G_{\phi}$-module, still assuming, however, that $G$ is free abelian of finite rank. Let $\omega: G_{\phi}\to \mbox{Aut}(A)$ be the homomorphism arising from the action of $G_{\phi}$ on $A$. Arguing just as we did for the map $\epsilon$ in the proof of Lemma 3.2, we can find a subgroup $M$ in $G$ of finite index such that
$\phi(M)\leq M$ and $M_{\phi'}\leq \mbox{Ker}\ \omega$, where $\phi':M\to M$ is the map induced by $\phi$.  Now set $Q=G/M$, and let $\phi'': Q\to Q$ be the map induced by $\phi$. Then there is an exact sequence
\begin{equation} 1\to M_{\phi'}\to G_{\phi}\to Q_{\phi''}\to 1. \end{equation}
Noticing that $Q_{\phi''}$ is finite, we obtain from (3.3) an exact sequence
\begin{equation} 1\to \widehat{M_{\phi'}}\to \widehat{G_{\phi}}\to Q_{\phi''}\to 1 \end{equation}
of profinite groups. Moreover, by the case for a trivial module proved above, we have that $H^n(M_{\phi'},A)$ is finite and  $H^n\widehat{(M_{\phi'}},A)\cong H^n(M_{\phi'},A)$ for all $n\geq 0$.
Thus, invoking the Lyndon-Hochschild-Serre spectral sequences for  (3.3) and (3.4), we can conclude that both properties (i) and (ii) hold.  Therefore, $G_{\phi}$ lies in $\mathcal{G}$ whenever $G$ is free abelian of finite rank. 

We now treat the case where $G$ is a finitely generated, abelian group that may contain torsion. In this case, $G$ contains a torsion-free subgroup $N$ such that $G/N$ is finite. Taking $M=\bigcap_{j=0}^{\infty}\phi^{-j}(N)$, we have $M\leq N$, $[G:M]<\infty$, and $\phi(M)\leq M$. Hence, letting $Q$, $\phi'$ and $\phi''$ be exactly as in the previous paragraph, we have the exact sequences (3.3) and (3.4)  in this case, too. Also, by the torsion-free case proved above, $H^n(M_{\phi'},A)$ is finite and  $H^n\widehat{(M_{\phi'}},A)\cong H^n(M_{\phi'},A)$ for all $n\geq 0$. Properties (i) and (ii), then, follow as above.
Therefore, $G_{\phi}$ belongs to $\mathcal{G}$.  

Finally, we assume that the solvability length of $G$ exceeds $1$. Let $N$ be the commutator subgroup of $G$ and $Q=G/N$. Then $\phi(N)\leq N$. Let $\phi':N\to N$ and $\phi'': Q\to Q$ be the maps induced by $\phi$. Then we have an exact sequence
$$1\to N_{\phi'}\to G_{\phi}\to Q_{\phi''}\to 1.$$
By the base case, we have that $Q_{\phi''}$ is in $\mathcal{G}$, and, by the inductive hypothesis, $N_{\phi'}$ belongs to $\mathcal{G}$. Moreover, by Lemma 3.2, $N_{\phi'}$ is finitely generated with respect to its profinite topology. Therefore, by Proposition 3.1, $G_{\phi}$ belongs to $\mathcal{G}$. 
\end{proof}

A scrutiny of the above proof, particularly its third paragraph, reveals that the argument can be easily extended to prove that every ascending HNN extension of a virtually polycyclic group is in $\mathcal{G}$.

\begin{theorem} If $G$ is a virtually polycyclic group and $\phi:G\to G$ an endomorphism,  then 
$G\ast_{\phi}$ is in the class $\mathcal{G}$.
\end{theorem}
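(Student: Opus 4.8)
The plan is to reduce everything to Theorem 3.6 by the device already used in the third paragraph of its proof, namely the passage to a $\phi$-invariant subgroup of finite index. Since a virtually polycyclic group is finitely generated, Corollary 3.4 reduces the assertion to showing that $G_{\phi}\in\mathcal{G}$, and that is what I would establish.

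Next I would produce a normal subgroup $M\unlhd G$ that is simultaneously polycyclic, of finite index, and $\phi$-invariant. Because $G$ is virtually polycyclic it contains a polycyclic subgroup of finite index, and replacing that subgroup by its normal core yields a \emph{normal} polycyclic subgroup $N\unlhd G$ with $[G:N]<\infty$. Setting $M=\bigcap_{j=0}^{\infty}\phi^{-j}(N)$, exactly as in the proofs of Lemma 3.2 and Theorem 3.6 one checks that $M\leq N$, $\phi(M)\leq M$, and $M\unlhd G$ (the last because $N\unlhd G$). Moreover, since $[G:\phi^{-j}(N)]\leq[G:N]$ for every $j$ while the finitely generated group $G$ has only finitely many subgroups of index at most $[G:N]$, the group $M$ is the intersection of finitely many finite-index subgroups and hence has finite index; as a subgroup of $N$ it is polycyclic.

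Writing $Q=G/M$ and letting $\phi'\colon M\to M$ and $\phi''\colon Q\to Q$ be the induced maps, passage to the direct limit in the $\phi$-equivariant extension $1\to M\to G\to Q\to 1$ yields
$$1\to M_{\phi'}\to G_{\phi}\to Q_{\phi''}\to 1.$$
Here $Q_{\phi''}$ is finite, being a direct limit of copies of the finite group $Q$, and a finite group plainly lies in $\mathcal{G}$; by the proof of Theorem 3.6, which establishes $G_{\phi}\in\mathcal{G}$ for polycyclic $G$, we have $M_{\phi'}\in\mathcal{G}$; and by Lemma 3.2, $M_{\phi'}$ is finitely generated relative to its profinite topology. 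Proposition 3.1 then gives $G_{\phi}\in\mathcal{G}$, whence $G\ast_{\phi}\in\mathcal{G}$ by Corollary 3.4.

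I do not expect a serious obstacle, since the whole cohomological content is already packaged in Theorem 3.6. The only points needing attention are the simultaneous verification of the four properties of $M$ and the exactness of the limit sequence. The former is a direct transcription of the finite-index argument in Lemma 3.2, and the latter holds because direct limits of short exact sequences of groups remain exact, a fact the paper already relies upon for the sequence (3.3).
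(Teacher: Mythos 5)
Your proof is correct and follows exactly the route the paper intends: the paper offers no detailed argument for this theorem, merely remarking that the third paragraph of the proof of Theorem 3.6 extends, and your passage to the $\phi$-invariant finite-index normal polycyclic subgroup $M=\bigcap_{j\ge 0}\phi^{-j}(N)$ together with the limit sequence $1\to M_{\phi'}\to G_{\phi}\to Q_{\phi''}\to 1$ is precisely that extension. The only cosmetic difference is that you conclude via Proposition 3.1 rather than by comparing the two Lyndon--Hochschild--Serre spectral sequences directly, which amounts to the same thing.
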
 

It still remains to prove the following elementary result about abelian groups, which we invoked in the proof of Theorem 3.6.

\begin{lemma} If $A$ is a torsion-free abelian group, then ${\rm Ext}(A,B)=0$ for any finite abelian group $B$.
\end{lemma}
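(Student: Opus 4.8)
The plan is to exploit the torsion-freeness of $A$ by comparing two descriptions of multiplication by a single well-chosen integer on the group ${\rm Ext}(A,B)$. Let $m$ denote the exponent of the finite abelian group $B$, so that $m\cdot\mathrm{id}_B=0$. Since ${\rm Ext}(-,-)$ is biadditive and natural in each variable, the endomorphism of ${\rm Ext}(A,B)$ induced by $m\cdot\mathrm{id}_B$ is multiplication by $m$; as this inducing map is $0$, the group ${\rm Ext}(A,B)$ is annihilated by $m$. On the other hand, the very same multiplication-by-$m$ endomorphism of ${\rm Ext}(A,B)$ is induced by $m\cdot\mathrm{id}_A$, and here torsion-freeness supplies the needed leverage: the map $m\colon A\to A$ is \emph{injective}.

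First I would record the short exact sequence
$$0\longrightarrow A\stackrel{m}{\longrightarrow} A\longrightarrow A/mA\longrightarrow 0,$$
which is exact precisely because $A$ is torsion-free. Applying the contravariant functor ${\rm Ext}(-,B)$ and using that $\mathbb Z$ has global dimension one, so that ${\rm Ext}^2(A/mA,B)=0$, the resulting long exact sequence terminates in
$$\cdots\longrightarrow {\rm Ext}(A,B)\stackrel{m}{\longrightarrow}{\rm Ext}(A,B)\longrightarrow 0.$$
Hence multiplication by $m$ is surjective on ${\rm Ext}(A,B)$. Combining this with the previous observation that the same map is $0$, the zero endomorphism of ${\rm Ext}(A,B)$ is surjective, which forces ${\rm Ext}(A,B)=0$, as required.

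The argument requires essentially no computation; the one point deserving care — and the crux of the proof — is the observation that the integer $m$ acts on ${\rm Ext}(A,B)$ simultaneously as $0$ (because it kills $B$) and as a surjection (because it is injective on the torsion-free group $A$ while higher Ext over $\mathbb Z$ vanishes). I do not anticipate a genuine obstacle here: the only facts invoked are the biadditivity and naturality of ${\rm Ext}$, the vanishing of ${\rm Ext}^2$ over $\mathbb Z$, and the injectivity of multiplication by $m$ on a torsion-free group. Should one prefer, the reduction may first be made to the case $B=\mathbb Z/m\mathbb Z$ by additivity of ${\rm Ext}(A,-)$ in its second variable, but this is unnecessary for the approach above.
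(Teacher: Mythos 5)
Your proof is correct, and it takes a genuinely different route from the paper's. The paper first reduces to $B=\mathbb Z/p$ by finiteness of $B$, then embeds $A$ into its divisible hull $A_0=A\otimes\mathbb Q$ to get a surjection $\mathrm{Ext}(A_0,\mathbb Z/p)\to\mathrm{Ext}(A,\mathbb Z/p)$, and finally kills $\mathrm{Ext}(A_0,\mathbb Z/p)$ by running the coefficient sequence $0\to\mathbb Z\stackrel{p}{\to}\mathbb Z\to\mathbb Z/p\to 0$ and using that multiplication by $p$ is an \emph{isomorphism} on the $\mathbb Q$-vector space $A_0$. You stay with $A$ itself and instead run the sequence $0\to A\stackrel{m}{\to}A\to A/mA\to 0$ in the first variable, settling for multiplication by $m$ being merely \emph{injective} on $A$, which still yields surjectivity on $\mathrm{Ext}(A,B)$ once $\mathrm{Ext}^2$ over $\mathbb Z$ vanishes; combined with the vanishing of the same endomorphism via $m\cdot\mathrm{id}_B=0$, this finishes the argument. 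Both proofs ultimately rest on the same two pillars (the hereditary property of $\mathbb Z$ and the clash between the two ways the integer acts), and indeed the paper's claimed epimorphism $\mathrm{Ext}(A_0,\mathbb Z/p)\to\mathrm{Ext}(A,\mathbb Z/p)$ also secretly uses $\mathrm{Ext}^2(A_0/A,\mathbb Z/p)=0$. Your version is slightly more economical: it avoids the detour through the divisible hull and the reduction to cyclic $B$, handling an arbitrary finite $B$ in one stroke via its exponent.
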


\begin{proof} We will prove the conclusion by showing that $\mbox{Ext}(A,\mathbb Z/p)=0$ for every prime $p$. Let $A_0=A\otimes \mathbb Q$, and consider the monomorphism $A\to A_0$. This map induces an epimorphism 
$\mbox{Ext}(A_0,\mathbb Z/p)\to \mbox{Ext}(A,\mathbb Z/p)$.
Hence the result will follow if we can establish that $\mbox{Ext}(A_0,\mathbb Z/p)=0$. To accomplish this, we employ the exact sequence
$$0\to \mathbb Z\stackrel{\times p}\to \mathbb Z\to \mathbb Z/p\to 0,$$
which gives rise to an exact sequence
$$\mbox{Ext}(A_0,\mathbb Z)\stackrel{\times p}\to \mbox{Ext}(A_0,\mathbb Z)\to \mbox{Ext}(A_0,\mathbb Z/p)\to 0.$$ Moreover, since multiplication by $p$ induces an isomorphism $A_0\to A_0$, the first map in the above sequence is an isomorphism, forcing the third group to be trivial.  

\end{proof}

In [{\bf 10}, Theorem 1.4] it is shown that every ascending HNN extension of a polycyclic group is a minimax group. In light of this, the question poses itself whether the result in Theorem 3.6 extends to all residually finite, solvable, minimax groups. Applying the same approach employed in the proof of Theorem 3.6, we provide a positive answer to this question in the following theorem. 

\begin{theorem} If $G$ is a residually finite, solvable, minimax group, then $G$ is in $\mathcal{G}$. \end{theorem}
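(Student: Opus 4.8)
The plan is to derive the theorem from Proposition 3.1 by resolving $G$ into three kinds of building blocks, each of which I will place in $\mathcal{G}$: residually finite abelian minimax groups, residually finite nilpotent minimax groups, and virtually polycyclic groups. The overall shape imitates the proof of Theorem 3.6, but one feature of that argument must be abandoned. There the induction ran along the derived series, with the abelianization $Q=G/[G,G]$ dispatched by the abelian base case; that is no longer legitimate here, since a quotient of a residually finite minimax group need not be residually finite, and in fact the abelianization may acquire a divisible summand. For instance, the group of integral $3\times 3$ unitriangular matrices in which the top-right entry is allowed to range over $\mathbb{Z}[1/p]$ is a reduced, torsion-free nilpotent minimax group, hence residually finite, yet its abelianization is $\mathbb{Z}^2\times C_{p^\infty}$, where $C_{p^\infty}\notin\mathcal{G}$ because its profinite completion is trivial while $H^2(C_{p^\infty},\mathbb{Z}/p)\neq 0$. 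I will therefore climb the upper central series inside the nilpotent part and split off the Fitting subgroup at the top, where residual finiteness is preserved.

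First I would treat the base case of a residually finite abelian minimax group $B$. Its torsion subgroup is a residually finite \v{C}ernikov group, hence finite, so by Proposition 3.1 it suffices to handle the torsion-free case. A torsion-free abelian minimax group has finite rank and involves only finitely many primes in its denominators; consequently it is automatically residually finite and, like $\mathbb{Z}[1/p]$, is finitely generated relative to its profinite topology. For a trivial finite module I would run exactly the universal-coefficient argument of Theorem 3.6: on the abstract side the relevant $\mathrm{Ext}$-term vanishes by torsion-freeness (Lemma 3.8), while on the profinite side $\hat{B}$ is torsion-free (Lemma 2.8) and, by the K\"unneth formula (Theorem 2.3), $H_{n-1}(\hat{B})$ is a sum of procyclic groups, so the corresponding $\mathrm{Ext}$-term vanishes; meanwhile Theorem 2.9 makes the $\mathrm{Hom}$-terms correspond. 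For an arbitrary finite $\hat{B}$-module I would pass to the finite-index subgroup on which $B$ acts trivially and compare the Lyndon--Hochschild--Serre spectral sequences of the abstract and profinite extensions, exactly as in formulas (3.3) and (3.4). This places every residually finite abelian minimax group in $\mathcal{G}$ and records that it is finitely generated in its profinite topology.

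Next I would handle a residually finite nilpotent minimax group $N$, arguing by induction on the nilpotency class. After splitting off the finite torsion subgroup via Proposition 3.1, I may assume $N$ is torsion-free. The center $Z(N)$ is then a torsion-free abelian minimax group, so it lies in $\mathcal{G}$ and is finitely generated in its profinite topology by the base case. The crucial point is that $N/Z(N)$ is again torsion-free nilpotent minimax of strictly smaller class: the center of a torsion-free nilpotent group is isolated, and a torsion-free nilpotent minimax group is reduced, hence residually finite. The inductive hypothesis thus gives $N/Z(N)\in\mathcal{G}$, and Proposition 3.1 applied to $1\to Z(N)\to N\to N/Z(N)\to 1$ yields $N\in\mathcal{G}$; a parallel induction shows that $N$ is finitely generated in its profinite topology.

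Finally I would assemble the general case. By the structure theory of soluble minimax groups (Robinson), the Fitting subgroup $\mathrm{Fit}(G)$ is nilpotent and $G/\mathrm{Fit}(G)$ is virtually polycyclic; the latter lies in $\mathcal{G}$ by the goodness of virtually polycyclic groups, which is contained in the proof of Theorem 3.7 upon taking the defining endomorphism to be the identity, so that $G_{\phi}=G$. Since $\mathrm{Fit}(G)$ is a residually finite nilpotent minimax group, the previous step places it in $\mathcal{G}$ and shows it to be finitely generated in its profinite topology, so Proposition 3.1 applied to $1\to\mathrm{Fit}(G)\to G\to G/\mathrm{Fit}(G)\to 1$ completes the proof. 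I expect the main obstacle to be precisely this last structural input together with the finite-generation-in-the-profinite-topology bookkeeping: one must verify both that $\mathrm{Fit}(G)$ is topologically finitely generated and that $G/\mathrm{Fit}(G)$ is genuinely virtually polycyclic rather than merely minimax-by-finite, and, throughout, that the quotients selected at each stage (in contrast with the abelianization) remain residually finite, so that the inductive hypotheses truly apply.
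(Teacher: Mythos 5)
Your base case coincides with the paper's: split a residually finite abelian minimax group as (finite torsion) $\oplus$ (torsion-free), note that the torsion-free part is finitely generated in its profinite topology, and run the universal-coefficient and Lyndon--Hochschild--Serre arguments already set up in the proof of Theorem 3.6. Where you genuinely diverge is the inductive step, and you have correctly isolated the central difficulty, namely that quotients of residually finite minimax groups need not be residually finite. The paper disposes of this with a much lighter device than yours: since $G$ is residually finite, it admits an abelian normal series all of whose terms are closed in the profinite topology of $G$ (for instance the closures of the terms of the derived series), and the induction runs on the length of that series; each quotient $G/A$ is then automatically residually finite because $A$ is closed, so only the abelian base case and Proposition 3.1 are ever invoked, and no nilpotent case is needed. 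Your route instead climbs the upper central series of the torsion-free nilpotent part and then splits off the Fitting subgroup, which buys a more concrete structural picture but at the cost of two extra inputs: that torsion-free nilpotent minimax groups are reduced, hence residually finite (fine, though it needs a citation), and --- the genuinely load-bearing one --- that $G/\mathrm{Fit}(G)$ is virtually polycyclic. You are right to flag the latter as the main obstacle: it is emphatically not a formal consequence of minimaxity, since $C_{p^\infty}\rtimes \mathbb{Z}[1/q]$, with $\mathbb{Z}[1/q]$ embedded in $\mathbb{Z}_p^{\times}=\mathrm{Aut}(C_{p^\infty})$ and acting faithfully, is a soluble minimax group whose Fitting subgroup is exactly $C_{p^\infty}$ and whose quotient by it is $\mathbb{Z}[1/q]$. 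So the statement you need is the refined one for reduced (equivalently, residually finite) soluble minimax groups, which is a genuine appeal to Robinson's structure theory rather than a routine verification, and until that citation is pinned down your argument carries a dependency that the paper's closed-series induction never incurs. If you wish to keep your architecture, the cleanest repair is to replace the Fitting-subgroup step (and with it the separate nilpotent case) by the closed abelian normal series observation.
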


\begin{proof} We begin by treating the case where $G$ is a residually finite, abelian, minimax group. 
As a minimax group, $G$ decomposes as $G=H\oplus T$, where $T$ is the torsion subgroup of $G$ and $H$ is torsion-free-- this is established in [{\bf 12}, Lemma 10.31(i)]. Moreover, since $G$ is residually finite, $T$ must be finite. According to [{\bf 12}, Lemma 10.31(ii)], $H$ is an extension of a free abelian group of finite rank by a direct sum of finitely many quasicyclic groups. Since both kernel and quotient in this extension are finitely generated relative to their profinite topologies, it follows that $H$, too, enjoys this property. Thus, invoking the same argument that was employed in the first two paragraphs of the proof of Theorem 3.6 in order to establish that $G_{\phi}$ is in $\mathcal{G}$, we can deduce that $H$ must belong to the class $\mathcal{G}$. Hence $G$ resides in this class as well.  

Now we consider the general case. Since $G$ is residually finite, it possesses an abelian normal series consisting solely of subgroups that are closed in the profinite topology of $G$. We will prove that $G$ is in $\mathcal{G}$ by inducting on the length of this series, the case of a series of length one having been disposed of above. Let $A$ be the first subgroup in the series. Then $A$ is in $\mathcal{G}$ by our base case. Since $A$ is closed, $G/A$ is residually finite, which means, in view of the inductive hypothesis, that it belongs to $\mathcal{G}$. In addition, by our reasoning in the first paragraph, $A$ is finitely generated with regard to its profinite topology. Therefore, Proposition 3.1 yields that $G$ is in $\mathcal{G}$. 
\end{proof}

\begin{acknowledgement}{\rm The author benefited from discussions with Pavel Zalesskii about profinite completions of ascending HNN extensions. In addition, he is indebted to Peter Symonds for enlightening him regarding profinite homology.}
\end{acknowledgement}


\begin{thebibliography}{13}


\bibitem[{\bf 1}]{brown}{\sc K. Brown.} {\it Cohomology of Groups} (Springer, 1997). 

\bibitem[{\bf 2}]{cartan}{\sc H. Cartan, et. al..} Alg\`{e}bres d'Eilenberg-MacLane et homotopie, S\'{e}minaire Henri Cartan 1954/55, Ecole Normale Sup\'{e}rieure, Paris.

\bibitem[{\bf 3}]{geoghegan}{\sc R. Geoghegan, M. Mihalik, M. Sapir, D. Wise.} Ascending HNN extensions of finitely generated free groups are hopfian. {\it Bull. London Math. Soc.} {\bf 33} (2001), 292-298.

\bibitem[{\bf 4}]{groves}{\sc J. Groves.} Soluble groups in which every finitely generated subgroup is finitely presented. {\it J. Austral. Math. Soc. Ser. A} {\bf 26} (1978), no. 1, 115-125. 

\bibitem[{\bf 5}]{def}{\sc F. Grunewald, A. Jaikin-Zapirain, A. Pinto} and {\sc P. Zalesskii}. Normal subgroups of profinite groups of nonnegative deficiency. Preprint available at http://arxiv.org.

\bibitem[{\bf 6}]{bianchi}{\sc F. Grunewald, A. Jaikin-Zapirain} and {\sc P. Zalesskii}. Cohomological goodness and the profinite completion of Bianchi groups. {\it Duke Math. J.} {\bf 144} (2008), 53-72.

\bibitem[{\bf 7}]{wise}{\sc T. Hsu} and {\sc D. Wise}. Ascending HNN extensions of polycyclic groups are residually finite. {\it J. Pure Appl. Algebra} {\bf 182} (2003), 65-78. 

\bibitem[{\bf 8}]{kropholler}{\sc P. Kropholler} and {\sc J. Wilson}. Torsion in profinite completions. 
{\it J. Pure and Applied Algebra} {\bf 88} (1993), 143-154.

\bibitem[{\bf 9}]{lorensen}{\sc K. Lorensen.} Groups with the same cohomology as their profinite completions. {\it J. Algebra} {\bf 320} (2008), 1704-1722. 

\bibitem[{\bf 10}]{rhem}{\sc A. Rhemtulla} and {\sc M. Shirvani}. The residual finiteness of ascending HNN extensions of certain soluble groups. {\it Illinois J. Math.} {\bf  47} (2003), 477--484. 

\bibitem[{\bf 11}]{profinite}{\sc L. Ribes} and {\sc P. Zalesskii}. {\it Profinite Groups} (Springer, 2000).

\bibitem[{\bf 12}]{robinson}{\sc D. Robinson}. {\it Finiteness Conditions and Generalized Soluble Groups, Part 2} (Springer, 1972).

\bibitem[{\bf 13}]{schroeer}{\sc S. Schr\"oer.} Topological methods for complex-analytic Brauer groups. {\it Topology} {\bf 44} (2005), 875-894.

\bibitem[{\bf 14}]{serre}{\sc J-P. Serre}. {\it Galois Cohomology} (Springer, 1997).

\bibitem[{\bf 15}]{weibel}{\sc C. Weibel.} {\it Homological Algebra} (Cambridge, 1996).

\bibitem[{\bf 16}]{wilson}{\sc J. Wilson.} {\it Profinite Groups} (Oxford, 1996).


\end{thebibliography}
\end{document}